\newtheorem{thm}{Theorem}
\newtheorem{rmk}[thm]{Remark}
\newcommand{\ie}{{\it i.e.\ }}
\newcommand{\eg}{{\it e.g.\ }}
\renewcommand{\d}{\mathrm{d}} 
\newcommand{\bigO}{{\mathcal O}}
\newcommand{\be}{\begin{equation}}
\newcommand{\ee}{\end{equation}}
\renewcommand{\S}{{\mathcal S}}
\newcommand{\D}{{\mathcal D}}
\newcommand{\phip}{{\Phi_{\mathrm{p}}}}
\newcommand{\phipf}{{\Phi_{\mathrm{p}, \text{far}}}}
\newcommand{\phipn}{{\Phi_{\mathrm{p}, \text{near}}}}
\renewcommand{\vec}[1]{\boldsymbol{\mathbf{#1}}}
\crefname{equation}{}{}
\newcommand{\xx}{\mathbf{x}}
\newcommand{\yy}{\mathbf{y}}
\newcommand{\kk}{\mathbf{k}}
\newcommand{\nn}{\mathbf{n}}
\newcommand{\lv}{\mathbf{d}}        
\newcommand{\R}{\mathbb{R}}
\newcommand{\Z}{\mathbb{Z}}
\newcommand{\C}{\mathbb{C}}
\DeclareMathOperator{\im}{Im}
\DeclareMathOperator{\re}{Re}
\DeclareMathOperator{\Nul}{Nul}
\DeclareMathOperator{\Dim}{dim}
\newcommand{\trp}{\mathrm{tr}} 
\newcommand{\res}{\mathrm{res}} 
\newcommand{\pO}{\partial\Omega} 
\newcommand{\bea}{\begin{eqnarray}}
\newcommand{\eea}{\end{eqnarray}}
\journal{Journal of Computational Physics}
\begin{document}

\begin{frontmatter}

\title{Trapped acoustic waves and raindrops:
  high-order accurate integral equation method
  for localized excitation of a periodic staircase}

\author[1]{Fruzsina J.\ Agocs\corref{cor1}} 
\cortext[cor1]{corresponding author.}
    \ead{fagocs@flatironinstitute.org}
\author[1]{Alex H.\ Barnett}
    \ead{abarnett@flatironinstitute.org}
\affiliation[1]{organization = {Center for Computational Mathematics, Flatiron Institute},
                    addressline = {162 Fifth Avenue},
                    city = {New York},
                    state = {NY},
                    country = {USA}}

\begin{abstract}
  We present a high-order boundary integral equation
  (BIE)
  method for the frequency-domain acoustic scattering of a point source by a singly-periodic, infinite, corrugated boundary. 
We apply it to the accurate numerical study of acoustic radiation in the neighborhood of a sound-hard two-dimensional staircase modeled after the \textit{El Castillo} pyramid.
Such staircases support trapped 
waves which travel along the surface and decay exponentially away from it.
We use the array scanning method (Floquet--Bloch transform) to recover
the scattered field as an integral over the family of
quasiperiodic solutions parameterized by on-surface wavenumber.
Each such BIE solution requires 
the quasiperiodic Green's function, which we evaluate using an efficient integral representation of lattice sum coefficients.
We avoid the singularities and branch cuts present in the array scanning integral by
complex contour deformation.
For each frequency, this enables a solution accurate to around 10 digits in a couple of seconds.
We propose a residue method to extract the limiting powers carried by trapped modes far from the source.
Finally, by computing the trapped mode dispersion relation,
we use a simple ray model to explain an observed acoustic ``raindrop'' effect
(chirp-like time-domain response).
\end{abstract}

\end{frontmatter}

\section{Introduction \label{intro}}

Periodic surface geometries
have long been exploited to manipulate electromagnetic
and acoustic waves. Examples on small and large length-scales include photonic
crystals \cite{busch2007periodic,jobook}, acoustic metamaterials
\cite{chen2017general,ji2022recent}, diffraction gratings
\cite{lin2019integral}, antennae \cite{munk1979plane,he2007radiation}, anechoic
chambers and materials \cite{herrero2020sound}, and amphitheatres \cite{declercq2007acoustic}.
However, the accurate numerical solution of radiation
near such geometries faces several challenges.
The domain---a perturbation of the upper half-space---%
is unbounded vertically,
thus its truncation must incorporate
the correct upward-going radiation conditions.
The possibility of coupling to waves trapped \textit{along} the surface,
which in two dimensions (2D) do not decay at all,
means that $\bigO(1)$ reflection errors would result by naive
truncation to any finite number of unit cell periods.
These trapped waves may also cause resonances with high
parameter sensitivity \cite{shipmanreview}.
Having non-periodic excitation breaks the periodicity of the problem, which
makes periodization impossible at first glance. However, it is possible to
deconstruct the single point-source solution into sets of quasiperiodic
solutions (the array scanning or Floquet--Bloch method).
As in any wave problem,
high frequencies may demand a large discretization density \cite{chandler2012numerical}.
Finally, the staircase model that we focus on here introduces corner
singularities that must be addressed in any high-order solver.
Since solvers are
often part of a design optimization loop to tune
material or shape parameters, they must be robust and efficient.
However, the above difficulties mean that nonconvergent Rayleigh expansions
\cite{richards2018acoustic}, or ray asymptotic methods
\cite{tsingos2007extending} are often used in acoustic settings.

The purpose of this paper is twofold:
1) We present a high-order numerical
boundary integral (BIE) method for 2D acoustic scattering
of a point source from a singly-periodic geometry, in particular
combining it with a high-order accurate array scanning method
via contour deformation.
2) We apply this to the accurate numerical study of acoustic radiation in the neighborhood of a sound-hard 2D staircase model, 
in order to understand time- and frequency-domain phenomena
due to nearby point-source excitation.
For the time-domain we compute the dispersion relation of trapped modes
and use it to explain an observed chirp effect \cite{calleja,hellerbook};
for the frequency domain we propose a method
to extract the amplitudes of the left- and right-going trapped modes,
using residues in the complexified on-surface wavenumber.
Architecturally, solid staircases with sound-hard surfaces are
clearly very common;
yet, we have not found an accurate numerical study of acoustic trapping
and guiding in their vicinity.
By combining high-order corner quadratures and lattice sums,
we show that a BIE can achieve around 10 digits of accuracy with
solution times of a couple of seconds per excitation frequency.

We consider a 2D model for
linearized acoustics in a simply-connected region $\Omega\subset \R^2$
containing a constant-density gas with constant sound speed $c>0$
\cite{coltonkress,howe1998acoustics},
\cite[Sec.~3.1]{coltonkress_scatt}.
This region lies above a connected corrugated boundary $\pO$ extending
with spatial period $d$ in the $x_1$ direction,
and is unbounded in the positive $x_2$ direction;
see \cref{stair-geometry}.
The boundary has a maximum and minimum $x_2$ coordinate.
Our paradigm example will be the slope-$\pi/4$ right-angle staircase shown
in the figure, whose repeating element (unit cell) comprises
two equal-length line segments at an angle $\pi/2$ to each other.
We write $\xx=(x_1,x_2)$.
This models a 3D situation in which both geometry and source
are invariant in the 3rd (out of plane) direction.

The tools we present mostly concern frequency domain solutions,
but we will also use them to understand a chirp phenomenon for the
following time domain problem.
Consider an impulse excitation by a point source at time $t=0$
and location $\xx_0\in\Omega$; this is a good model
for a clap or footstep in the acoustic application.
The acoustic pressure $U(t,\xx)$ then obeys the wave equation
with a source,
\be
\frac{\partial^2 U(t, \vec{x})}{\partial t^2}
- \Delta U(t, \vec{x})  =
\delta(t)\delta(\xx-\xx_0),
\qquad t\in\R, \quad \xx\in\Omega,
\label{WE}
\ee
where $\Delta  = \nabla \cdot \nabla$ is the spatial Laplacian,
and by a rescaling of time $t$ we set the sound speed to $c=1$.
We assume quiescence before the excitation: $U\equiv 0$ for $t<0$.
\footnote{Note that \cref{WE} could thus be rephrased as the homogeneous
wave equation in $t>0$ with initial conditions $U(0,\cdot)\equiv0$,
$U_t(0,\cdot) = \delta_{\xx_0}$.}
The sound-hard (Neumann) boundary condition is
\be
U_n(t,\xx) = 0, \qquad  t\in\R, \quad \xx \in \partial\Omega,
\label{WEBC}
\ee
where $U_n := \nn \cdot\nabla U$ is the normal derivative, $\nn$
being the unit boundary normal vector pointing into $\Omega$.
This arises physically since the normal component of the fluid velocity
vanishes; it is a good approximation to an air-solid interface
\cite[Ch.\ 1]{kaltenbacher2018computational}.

Taking the Fourier transform of \cref{WE,WEBC} with respect to $t$
gives the main focus of this paper, the inhomogeneous Helmholtz
Neumann boundary value problem (BVP),
\bea
-(\Delta + \omega^2) u  & =& \delta_{\xx_0}   \qquad \mbox{ in } \Omega,
\label{helmnp}
\\
u_n  &=& 0 \qquad \mbox{ on } \partial\Omega,
\label{neubc}
\eea
and radiation conditions explained below.
Here $\omega\in\R$ is the frequency, and the wavelength is $2\pi/\omega$.
From the $\omega$-dependence of its solution $u$
one may understand features of the above time-domain solution $U$.

\begin{figure}[tbp]
\centering
    \subfloat{\includegraphics[width = 0.27\textwidth, valign = c]{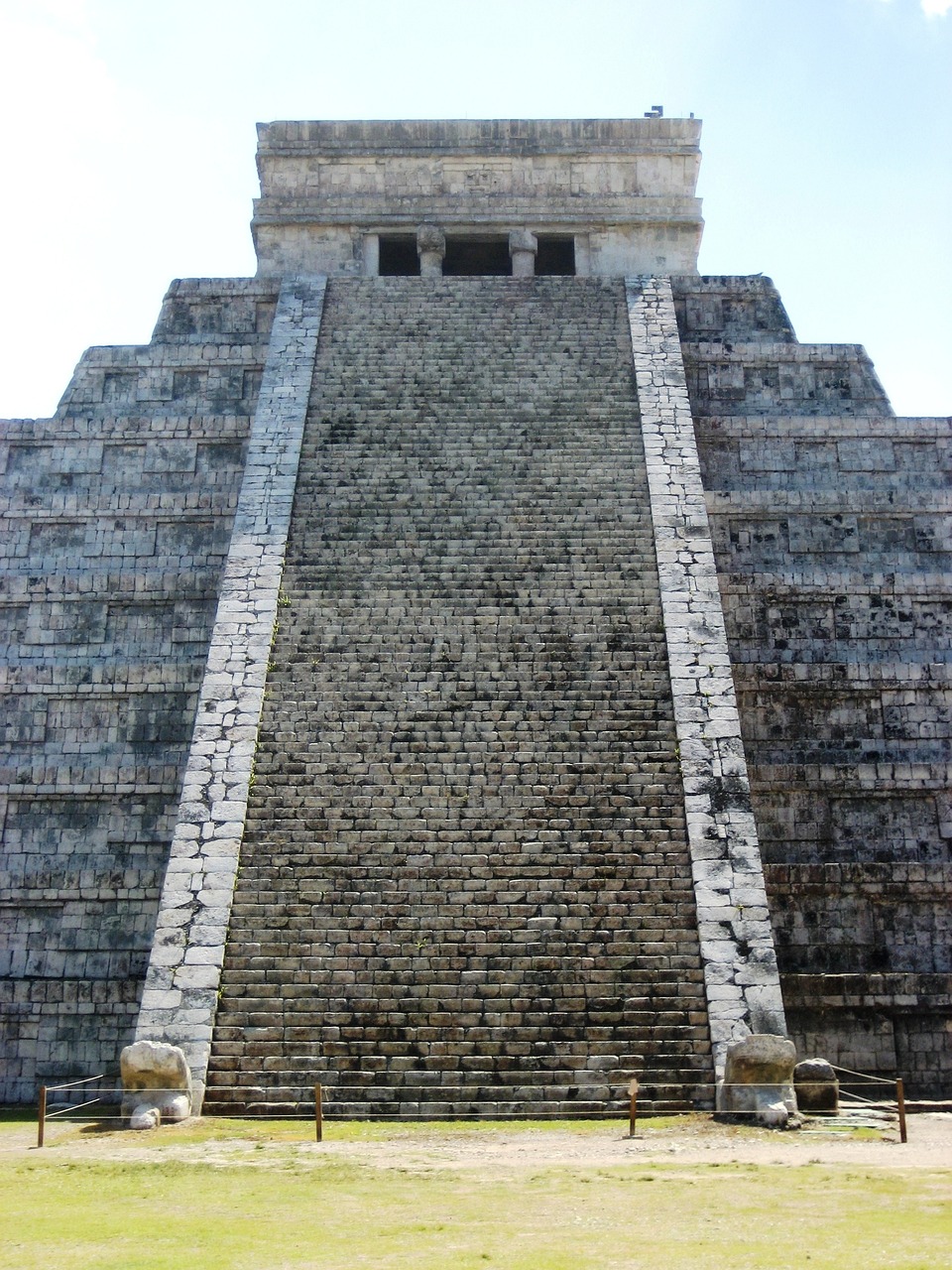}}
    \hfill
    \subfloat{\includegraphics[width = 0.7\textwidth, valign = c]{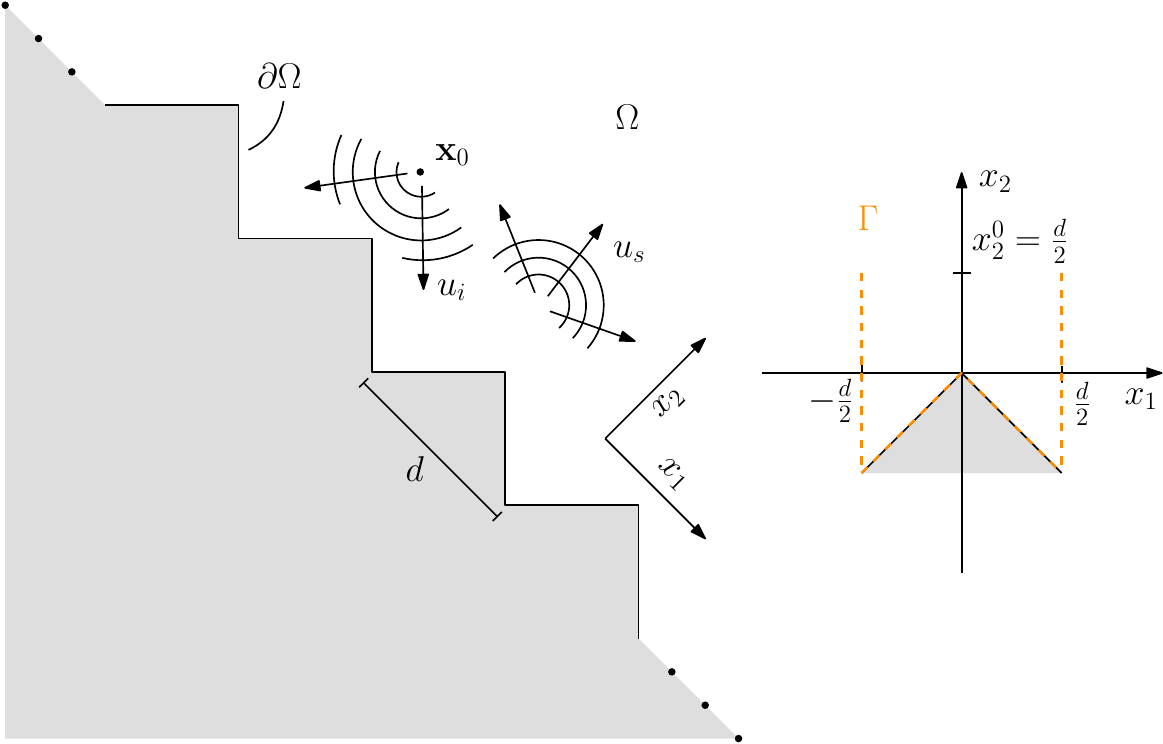}}
    \caption{Left: Photograph of \emph{El Castillo} at Chichen Itza, Mexico. Center: 2D infinite staircase geometry with coordinates used. Right: a single period $\Gamma$ of the boundary, now shown with $x_1$ horizontal (the orientation used throughout). \label{stair-geometry}}
\end{figure}

The remainder of this extended Introduction
sets up definitions needed to convert \cref{helmnp,neubc}
into BVPs posed on a bounded unit cell,
and overviews the rest of the paper.
The solution to \cref{helmnp,neubc}
may be decomposed into a linear combination of
\emph{quasiperiodic} solutions,
via what is known as the array scanning method
in the engineering literature \cite{munk1979plane,rana1981current,capolino2005mode},
or inverse Floquet--Bloch transform in mathematics \cite{lechleiter2017convergent,ruming21}.
This family of quasiperiodic solution is parameterized
by $\kappa\in\R$, the horizontal (along-surface) wavenumber,
and each member of the family has the phased translational symmetry
$u_\kappa(x_1 + nd, x_2) = \alpha^n u_\kappa(x_1, x_2)$, for all $n\in\Z$
and $\xx\in\Omega$,
where the \emph{Bloch phase} is $\alpha = e^{i\kappa d}$.
The set of \emph{plane waves} of the form $e^{i\kk\cdot\xx}$
obeying the homogeneous Helmholtz equation and quasiperiodic symmetry is discrete, namely $\kk_n = (\kappa_n,k_n)$, where
\be
\kappa_n := \kappa + 2\pi n/d, \qquad n \in \Z
\label{kapn}
\ee
is the shifted lattice of horizontal wavenumbers with the same $\alpha$,
and
\be
k_n := +\sqrt{\omega^2 - \kappa_n^2}
\label{kn}
\ee
is the vertical wavenumber. 
$k_n$ is either positive real (upwards-propagating), zero (horizontally propagating), or positive imaginary (upwards-decaying).

Then, for a given $\kappa$, the quasiperiodic solution $u=u_\kappa$ solves the
BVP
\begin{alignat}{3}
  -(\Delta + \omega^2)u &= f_\kappa\quad
  &&\text{in } \Omega, \quad &&\text{\textit{(PDE)}} \label{helm-pde} \\
  u_n &= 0 \quad &&\text{on } \partial \Omega,\quad
  &&\text{\textit{(boundary condition)}} \label{helm-bc} \\
  u(x_1 + d, x_2) &= \alpha u(x_1, x_2) &&(x_1, x_2) \in \Omega, \quad &&\text{\textit{(quasiperiodicity)}} \label{qp-cond} \\
  u(x_1, x_2) &= \sum_{n \in \mathbb{Z}} c_n e^{i(\kappa_nx_1 + k_n x_2)}, \quad && x_2 > x_2^{0}, \quad &&\text{\textit{(radiation condition)} \label{uprc}}
\end{alignat}
where $x_2^{(0)}$ is any height lying above the support of the source and above
$\pO$, and $c_n$ are amplitudes of the outgoing plane waves.
The source $f_\kappa$ must naturally
also be quasiperiodic: it is a quasiperiodized version of the
desired right-hand side $\delta(\xx-\xx_0)$, namely
\be
f_\kappa(\xx) := \sum_{n = -\infty}^{\infty}  e^{in\kappa d} \delta(\xx-\xx_0-n\lv),
\label{asm-setup-per}
\ee
where $\lv:=(d,0)$ is the lattice vector.
Fixing $\omega>0$, the above BVP has a unique solution
for all $\kappa\in\R$ apart from possibly a discrete set
(which will correspond to trapped modes, and discussed
in detail in \cref{trapped-modes}) \cite{bonnetBDS}.
The above also extends to $\kappa\in\C$; see \cite[Sec.~4.1]{shipmanreview}.

Each member $u_\kappa$ of the above quasiperiodic solution family
is the acoustic response of the periodic geometry to
an infinite array of phased point sources.
The key observation (giving ``array scanning'' its name)
is that one may cancel out all but the central
source by integration over $\kappa$ in the first Brillouin zone:
\be
\delta(\xx-\xx_0) = \frac{d}{2\pi} \int_{-\pi/d}^{\pi/d} f_\kappa(\xx) \d\kappa.
\label{asm-setup-single}
\ee
This exploits the fact that $\int_{-\pi/d}^{\pi/d} e^{in\kappa d} \d\kappa = 2\pi/d$ for $n=0$, and zero
for $n\in\Z, n\neq 0$.
Since this is the right-hand side in \cref{helm-pde},
then, by linearity, performing the same integral over the solution
family $u_\kappa$,
\be
u(\xx) = \frac{d}{2\pi} \int_{-\pi/d}^{\pi/d} u_\kappa(\xx) \d\kappa
\label{asmu}
\ee
recovers $u$, the solution to \cref{helm-pde,NBC} with
upward-propagating radiation conditions.
\begin{rmk} 
  \label{trappedrc}
  In the case where $u_\kappa$ is not unique for some discrete
  real $\kappa$ values, the integral \cref{asmu} has poles
  on the real axis corresponding to 
  evanescent waves trapped on the corrugated surface.
  General conditions for existence of such Neumann boundary condition trapped modes are not known \cite[p.11--12]{wilcox},
  although they are well known, and proven to exist for certain geometries
  \cite{evans93,gotlib00}.
  The mathematical formulation of causal
  outgoing radiation conditions in the presence of trapped modes
  is subtle, relying on the \emph{limiting absorption principle}
  (for dielectric cases see \cite{kirsch17,epsteinopen2}).
  We did not find the sound-hard case in the literature,
  but, following Zhang \cite{ruming21}, we
  define this by the topology of the integration contour with respect to the poles.
  This is presented in \cref{singleptsrc}.
\end{rmk}
The integration interval $\kappa \in [-\pi/d, \pi/d]$
is known as the \emph{first Brillouin zone},
and covers the family of solutions.
This is because $u_\kappa$ is the same for all $\kappa$ in the set
\cref{kapn} (since $\alpha$ is the same, and so is the
set of plane waves in the radiation condition, up to relabeling).
The integration may also be thought of as over $\alpha$ on the unit
circle \cite{ruming21}.
We present a high-order accurate
quadrature method for the above integral \cref{asmu}. This is
crucial for efficiency, because each such quadrature node demands a new
BVP solution of \cref{helm-pde,helm-bc,qp-cond,uprc}.
Here,
care is needed due to the possibility of poles mentioned in the above remark,
but also because the
integrand contains two square-root singularities (with associated branch cuts)
at so-called \emph{(Rayleigh--)Wood anomalies}
\cite{wood,fano1941theory,hessel1965new}.
These anomalies are defined as pairs $(\omega,\kappa)$ where
$k_n =0 $ for some $n\in\Z$ in \cref{kn}
(see \cref{wood-anom-bz}),
resulting in a horizontal plane wave.
Although the quasiperiodic BVP remains well-posed,
it poses challenges for Green's function based numerical methods
\cite{arens06,brunohaslam09,barnett_repr_QPS_2D,delourme14,cho2015robust}.
Following Zhang \cite{ruming21}
in the setting of closed waveguides,
we propose contour deformation to complex $\kappa$
to avoid such anomalies and poles.
Rather than the piecewise-smooth contours of that work, we
use a more efficient analytic contour deformation,
and optimize a deformation parameter to minimize the number of nodes needed;
see \cref{singleptsrc}.

Boundary integral equation (BIE) methods
\cite{coltonkress_scatt,kress1989linear} are especially suited for the high-order accurate solution of each quasiperiodic BVP.
Since they operate by first converting the PDE to a boundary integral, then discretizing it to yield a dense linear
system, they only require the evaluation of a 1D integral instead of the
discretization of a (truncated) infinite 2D domain. This reduction of dimensionality significantly reduces
the number of unknowns, and allows for an easy increase in order of accuracy via high-order quadrature rules. 
Furthermore, the staircase geometry has two corners per period,
one of which induces fractional-power-law singularities in $u$;
these are easily handled with BIE.
In contrast, a finite differencing (FD) or finite elements (FEM) would require
meshing of the domain in a manner respecting the corner singularity,
and explicit handling of the radiation condition \cref{uprc}.
Both FD \cite{lechleiter2017convergent} and FEM \cite{zhang2018high}
have been combined with array scanning to solve the scattering from periodic surfaces,
but using only low-order spatial discretization.

There exist other boundary-based approaches.
These include meshfree methods such as the method of
fundamental solutions (MFS) \cite{fairweather_mfs,cheng_mfs_overview,barnett2008stability}, and the plane waves method \cite{alves2005numerical}.
These share common roots with BIE methods in that the solution is constructed
from Helmholtz solutions in the domain,
but generally give ill-conditioned systems.
For scattering by periodic surfaces, a family of methods exists based on the Rayleigh
hypothesis (Rayleigh methods) \cite[p.\ 17]{petit}, \cite{millar1973rayleigh}.
These assume that an expansion of the
scattered field like \cref{uprc} is valid close to and on the surface, and
approximate the solution as a truncation of \cref{uprc}. This assumption, however, does not generally hold.
Fast approximate solutions are most commonly derived using the
Helmholtz--Kirchhoff approximation \cite{meecham1956use}, which assumes that
each point on the boundary scatters as if it was a plane at a slope matching
that of the boundary. This represents a short-wavelength limit and is closely
related to the geometrical acoustics
\cite{keller1958geometrical,keller1962geometrical}; its validity has been
studied thoroughly \cite{richards2018acoustic}. 

To use boundary integral methods, one
splits the physical solution as $u = u_i + u_s$, where
the \emph{incident wave}
$u_i$ solves \cref{helm-pde}
with free-space radiation conditions and no boundary conditions,
and is thus known analytically,
whilst the unknown \emph{scattered wave}
$u_s$ solves the homogeneous version of \cref{helm-pde},
with inhomogeneous boundary condition $(u_s)_n = -(u_i)_n$ on $\pO$,
and \cref{qp-cond,uprc}.
A BIE is then used to solve this BVP for $u_s$, as presented in \cref{bie}.

A BIE formulation on the infinite surface $\pO$ would not be numerically
feasible, and its truncated solution converges very slowly
since the contributions of distant sources decay only
like $1/\sqrt{r}$.
However, the computation of the quasiperiodic solution may 
be reduced to a single unit cell of the boundary. This periodization process involves replacing in the integral kernels the
free-space Green's function $\Phi(\xx)$, defined as the radiative solution to
\be
-(\Delta + \omega^2)\Phi(\xx) = \delta(\xx),
\ee
with the quasiperiodic
Green's function $\phip(\xx)$ defined as the upwards- and downwards-radiating
solution to
\be
-(\Delta + \omega^2)\phip(\xx) = 
\sum_{n=-\infty}^{\infty} \alpha^n \delta(x_1 - nd) \delta(x_2).
\label{phip}
\ee
The periodic Green's function is therefore an infinite phased sum of single point-source Green's functions.
The sum
is slowly convergent and cannot be used
directly \cite{linton98}.
Yet a wide range of methods exist
for the rapidly-convergent approximation of $\phip$,
including reformulation in terms of quickly convergent lattice sums
\cite{linton2010lattice}.
In \cref{periodization}, we discuss one such efficient computation of $\phip$
based on \cite{yasumoto1999efficient}.
By the use of contour deformation to complex $\kappa$,
we will avoid Wood anomalies where \cref{phip} does not exist.

With the periodic Green's function in
hand, \cref{discretization} describes the boundary layer representation of the
solution and the discretization of the boundary, including the treatment of the singularities present in
the boundary integral at the corners of the boundary.
In general, these may be handled analytically,
\eg via Gauss--Jacobi quadrature \cite{tsalamengas2016gauss},
or conformal mapping \cite{driscoll2002schwarz}; or
handled to high order by generalized Gauss
quadrature \cite{bremer2010universal}, recursive compressed inverse
preconditioning \cite{helsing2008corner}, or rational function approximation
\cite{gopal2019new}.
In \cref{beyond-uc} that follows, we describe how the total field is
reconstructed outside the unit cell, and verify the high-order accuracy of the
solution with convergence tests which exploit flux conservation.

\begin{rmk}
  A distributed spatial source (right-hand side in \cref{helm-pde})
  may also be handled by a slight
  generalization of our framework:
  one numerically computes their incident wave $u_i$ by
  convolution of the source function with $\phip$. The
  BIE solutions for $u_s$ then proceed as before.
\end{rmk}

In \cref{trapped-modes} we will show how numerically to
locate eigenparameters $(\omega,\kappa)$ for which trapped modes
exist.
Such modes are eigenfunctions, i.e., nontrivial homogeneous solutions to
the BVP \cref{helm-pde,helm-bc,qp-cond,uprc}.
As \cref{trappedrc} implied, at each $\omega$, such $\kappa$ values
are vital to know since they induce poles in the array scanning
(inverse Floquet--Bloch) integral.
Yet, the mode \emph{dispersion relation}---the dependence of $\omega$
vs $\kappa$ for trapped modes---also will provide a model to
understand the ``raindrop effect'' (chirp-like response)
for the time-domain problem \cref{WE,WEBC}.
For this reason,
we present a strategy to solve for the trapped $\omega$ at a given $\kappa$.
This involves rootfinding on
the Fredholm determinant of the $\omega$-dependent integral operator, as done in \cite{zhaodet}.
A trapped mode may be reconstructed from the eigenfunction
of the Fredholm 2nd-kind integral equation.
From the dispersion relation, we
derive their group velocities; the speed at which a given trapped mode
propagates along the surface.
We use an approximate ray model (neglecting amplitudes) to predict 
the arrival times of different frequencies at the bottom of a staircase
modeled after the \emph{El Castillo} pyramid,
in order to understand the chirp-like sounds observed.

We also pose, and answer in Section~\ref{power-in-trapped}, the following:
how can one efficiently use the array scanning method to report
the power carried away by (left- or right-going) trapped modes,
as opposed to power radiated upwards away from the surface?
Characterizing this division of radiated power as a function of frequency
is crucial in related engineering applications such as
point-source radiators in nanophotonics and acoustic metamaterials.
Finally, we discuss avenues for future work in \cref{future}.

\section{BIE formulation, periodization, and discretization}
\label{bie}

Here we present the numerical method for solving the quasiperiodic
BVP \cref{helm-pde,helm-bc,qp-cond,uprc}, at a given $\omega>0$ and $\kappa\in\C$.
The physical wave (potential) is written $u = u_i + u_s$.
Fixing the source $\xx_0\in\R^2$,
we take the incident wave to be the quasiperiodic function
\be
u_i(\xx) = \phip(\xx,\xx_0), \quad \xx \in \R^2,
\ee
where we use the notation $\phip(\xx,\yy) = \phip(\xx-\yy)$, recalling
\cref{phip}.
The PDE and boundary condition for $u$ \cref{helm-pde,helm-bc} imply that
$u_s$ solves the BVP
\begin{alignat}{2}
    (\Delta + \omega^2)u_s &= 0 \quad &&\text{in } \Omega, \label{helm-us} \\
  (u_s)_n &= - (u_i)_n \quad &&\text{on } \partial\Omega,
  \label{NBC}
\end{alignat}
with $u_s$ obeying the quasiperiodicity and radiation conditions \cref{qp-cond,uprc}.
Since the PDE is now homogeneous, a BIE solution becomes possible.

The unknown scattered field $u_s$ is
represented by
a quasiperiodic single-layer potential
\be
u_s(\xx) = (\S \sigma)(\xx) := \int_\Gamma \phip(\xx, \yy) \sigma(\yy) \d s_\yy, \quad \xx \in \R^2,
\label{rep}
\ee
where the usual fundamental solution $\Phi(\xx, \yy)$ has been replaced with
its quasiperiodic counterpart,
and $\Gamma := \partial\Omega \cap \{|x_1|\le d/2\}$ is a single unit cell (period) of the boundary.
As $\xx$ approaches the boundary from either side, let us define 
\begin{align}
    u_s^{\pm}(\xx) &:= \lim_{h \to 0^{\pm}} u_s(\xx + h\vec{n}_\xx)
    \label{upm}, \quad \xx \in \Gamma, \\
    (u_s)_n^{\pm}(\xx) &:= \lim_{h \to 0^{\pm}} \vec{n}_\xx \cdot \nabla u_s(\xx + h\vec{n}_\xx), \quad \xx \in \Gamma.
  \label{unpm}
\end{align}
Here $\vec{n}_\xx$ is the unit normal to the boundary at the target point $\xx$.
The above representation then satisfies the jump relations (see \cite[Ch 6.3]{kress1989linear})
\begin{align}
    u_s^{\pm} &=  S \sigma, \quad \mbox{ on }\;\Gamma,\\      
    (u_s)_n^{\pm} &= \left( D^{\mathrm{T}} \mp \frac{1}{2} I \right) \sigma, \quad \mbox{ on }\;\Gamma,
\end{align}
where $D^T$ denotes the adjoint double-layer operator on $\Gamma$,
namely the operator with kernel
$\partial \phip(\xx, \yy)/\partial \vec{n}_\xx$ taken in the principal value sense.

The single-layer representation for $u_s$ automatically satisfies \cref{helm-us}, and
the boundary condition can be written in terms of the single-layer jump
condition as
\be
(I - 2D^{\mathrm{T}})\sigma = 2 (u_i)_{n} \quad \text{ on }\;\Gamma. \label{fredholm-ii-cts}
\ee
This is a Fredholm integral equation of the second kind.
\begin{rmk}
  The expert reader may wonder why a combined-field representation
  (``CFIE'') is not
  needed here to prevent spurious resonances,
  as in the case of a bounded obstacle
  \cite{coltonkress}.
  In fact \cref{rep} is sufficient when the unbounded
  $\pO$ is a graph of a function, because then the Dirichlet
  BVP in the complementary domain $\R^2 \backslash \overline\Omega$
  is unique for any $\omega$; see \cref{SLP} below and its proof.
\end{rmk}

\begin{figure}  
\centering
    \includegraphics[width = 0.5\textwidth]{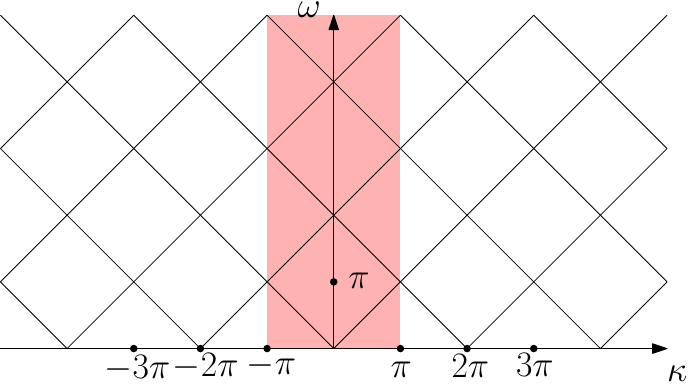}
    \caption{Location of Wood anomalies (black lines) and the first Brillouin zone (red shaded region of the $\kappa$ axis) in the $\omega$-$\kappa$ plane,
      for the case of spatial periodicity $d=1$.  
    \label{wood-anom-bz}}
\end{figure}

\subsection{Evaluation of quasiperiodic Green's functions \label{periodization}}

Here we describe an efficient numerical scheme to
evaluate $\phip(\xx)$ appearing in the above BIE,
using a local expansion about the origin,
with lattice sum coefficients evaluated following
\cite{yasumoto1999efficient}.
Note that, since it is only valid up to a vertical height comparable to the unit
cell width, a different plane-wave representation
given in \cref{beyond-uc} will be used for $x_2$ values beyond this.
Recall the definitions \cref{kapn,kn}, and that
a Wood anomaly is a point in the $(\omega,\kappa)$ parameter
plane where $\kappa_n$ crosses $\kappa_n = \omega$.
At such anomalies $\phip$
does not exist (e.g. see \cite{linton98,barnett_repr_QPS_2D}).
We will thus assume that $(\omega,\kappa)$ is not at a Wood anomaly
(and direct the reader to \cite{barnett_repr_QPS_2D,delourme14} for
methods for quasiperiodic problems precisely at Wood anomalies).

The quasiperiodic Helmholtz Green's function in 2D is given by \cite{linton98}
\be 
\phip(x_1, x_2; \omega, d, \alpha_0) = \frac{i}{4}\sum_{n = -\infty}^{\infty} H_0^{(1)} \left( \omega\sqrt{(x_1 - nd)^2 + x_2^2 }\right) \alpha^n.
\label{phip-def}
\ee
The sum's slow, conditional
convergence means it is of little practical use. We follow the approach
described in \cite{yasumoto1999efficient} to rewrite it in terms of
rapidly
convergent
lattice sums. Our derivation, however, differs in two key ways; we therefore
reproduce some of the calculation below for convenience.
Graf's
addition theorem \cite[(10.23.7)]{dlmf}
is used first to expand each of the
$H_0^{(1)}$ centered on source point with an $x_1$ coordinate outside of the
unit cell, $|x_1| > \tfrac{d}{2}$, around the equivalent point inside the unit
cell. The resulting expression contains the Bessel functions $H_n^{(1)}$ and
$J_n$. Terms multiplying the same-order $J_n$ are collected; their
coefficient, the $n$th order lattice sum $S_n$, is expressed as a contour integral.
For faster convergence, following \cite{barnett_repr_QPS_2D},
we instead exclude $|n| \leq 1$ from the sum in
\cref{phip-def} and add them together directly; this can be thought of
splitting the periodic Green's function into a near and far component:
\be
    \phip = \phipn + \phipf,
\ee
with
\be
    \phipn = \frac{i}{4}\left[ H_0^{(1)}\left(\omega\rho_0\right)  
                             + \alpha^{-1} H_0^{(1)}\left(\omega\rho_1\right)\
                             + \alpha H_0^{(1)}\left(\omega\rho_{-1}\right)\right],
\ee
where $\rho_j = \sqrt{(x_1 - jd)^2 + x_2^2}$. We use Graf's theorem on the remaining terms to write 
\be
    \phipf = \frac{i}{4}S_0(\omega d, \kappa)J_0(\omega \rho_0) + \frac{i}{2}\sum_{n = 1}^{\infty} S_n(\omega d, \kappa) J_n(\omega\rho_0)\cos(n\phi), \label{phipf}
\ee
where $\phi$ is the angle describing the source-target displacement vector
$(x_1, x_2)$ in polar coordinates.
The lattice sums $S_n$ are independent of the source-target displacement,
therefore they only need to be
computed once for each value of $\omega$ and $\kappa$. We follow
\cite{yasumoto1999efficient} to derive the integral representation 
\begin{align}
    S_n(\omega d, \kappa) \approx  -ie^{i\tfrac{\pi}{4}}\frac{\sqrt{2}}{\pi} \bigg[ &(-1)^n \alpha^{-2} \int_0^a [G_n(\tau) + G_n(-\tau)] F(\tau; \omega d, \kappa)\d t \nonumber\\
    &+ \alpha^2 \int_0^a [G_n(\tau) + G_n(-\tau)] F(\tau; \omega d, -\kappa)\d t\bigg], \label{sn-int}
\end{align}
with $\tau = (1 - i)t$ and
\begin{align}
    G_n(t) &= \left( t - i\sqrt{1 - t^2}\right)^n, \\
    F(t; \omega d, \kappa) &= \frac{e^{2i\omega d\sqrt{1 - t^2}}}{\sqrt{1 - t^2}[1 - e^{i\omega d(\sqrt{1 - t^2} - \kappa / (\omega d))} ]}.
\end{align}
The integrals appearing in $S_n$ may be evaluated numerically using the trapezoidal rule.
The reason behind \cref{sn-int} being an approximate expression instead of an
equality is that upper limit of the integrals, $a$, has been truncated from
$\infty$, exploiting the fact that the integrand decays quickly for $t \gg 1$
due to the exponential factor in $F$. 

Three convergence parameters are needed here: the number of
terms $N$ at which the sum in $\phipf$ (in \cref{phipf}) is truncated, the
upper limit $a$ of the integrals in \cref{sn-int},
and the number of nodes $M$ used
in the trapezoidal rule to calculate \cref{sn-int}.
For the typical $\omega$ and accuracies that we present, we found $N = 40$, $a = 15$, and $M = 10^4$ sufficient.

\subsection{Discretizing the boundary integral \label{discretization}}

We discretize and solve the integral equation \cref{fredholm-ii-cts} using the
Nystr\"om method \cite[Sec 12.2]{kress1989linear}, 
summarized below. First, \cref{fredholm-ii-cts} is written in the standard form
\be
    \sigma(t) - \int_a^b K(t, s)\sigma(s)\d s = f(t) \quad \text{for} \quad t \in [a, b],
\ee
where $K(t, s)$ is the appropriate kernel function, and the boundary is
parameterized by $s$ that runs from $s = a$ to $s = b$.
Then, the unknown \emph{density} function $\sigma(t)$ is approximated by another function $\sigma_n(t)$ that obeys
\be
    \sigma_n(t) - \sum_{j = 1}^n w_j  K(t, s_j) \sigma_n(s_j) = f(t), \label{nystrom-disc}
\ee
where the integral has been replaced with a quadrature formula with nodes
$\{s_i\}_{i = 1}^n$ and weights $\{w_i\}_{i=1}^{n}$, and the kernel function $K$ with a rank-$n$ operator. Then
the values of $\sigma_n(t)$ at the nodes $\{s_i\}$, $\{\sigma_i^{(n)}\} :=
\{\sigma_n(s_i)\}_{i = 1}^{n}$ satisfy the linear system
\be
    \sigma_i^{(n)} - \sum_{j = 1}^n w_j K(s_i, s_j)\sigma_j^{(n)} = f(s_i), \quad \forall i = 1, 2, \ldots, n,
    \label{nystrom-sys}
\ee
or in matrix notation,
\be 
    A\vec{\sigma}^{(n)} = \vec{f}, \label{nystrom-linsys-vecnot}
\ee
with $A_{ij} = \delta_{ij} - K(s_i, s_j)w_j$. 
Then if any vector 
$\vec{\sigma}^{(n)} := \{\sigma_i^{(n)}\}$ 
solves the above system, then $\sigma^{(n)}(t)$ at any $t$ can be
reconstructed as 
\be
    \sigma^{(n)}(t) = f(t) + \sum_{j = 1}^n w_j K(t, s_i)\sigma_j^{(n)}. \label{nystrom-rec}
\ee
Substituting the kernel associated with our exterior Helmholtz problem with
boundary $\partial \Omega := z(s)$, the $A$ matrix in \cref{nystrom-linsys-vecnot} becomes:
\be
A_{ij} = \delta_{ij} - 2 w_j z'(s_j) \frac{\partial\phip(s_i, s_j)}{\partial \vec{n}_{s_i}}, \quad i,j = 1,2,\dots, n, \label{A-expr}
\ee
while the components of $\vec{f}$ are given by
\be 
f_j = 2(u_i)_n(s_j),  \quad j = 1, 2, \dots, n. \label{f-expr}
\ee
Finally, the gradient of $\phip$, necessary for filling the $A$-matrix in \cref{A-expr}, is
\begin{align}
    \nabla \phip(\vec{x}, \vec{y}) = &\frac{i}{4}\bigg(- \omega H_1^{(1)}(\omega \rho_0) \frac{\vec{x} - \vec{y}}{\rho_0} 
                     - \omega H_1^{(1)}(\omega \rho_1) \frac{\vec{x} - \vec{y} - d\vec{e}_{x_1}}{\rho_1} \nonumber\\
                     &- \omega H_1^{(1)}(\omega \rho_{-1}) \frac{\vec{x} - \vec{y} + d\vec{e}_{x_1}}{\rho_{-1}}
                     - \omega J_1(\omega \rho_0) S_0 \frac{\vec{x} - \vec{y}}{\rho_0} \label{grad-phip}\\
                     &+ 2 \sum_{n = 1}^{\infty} \omega J_n'(\omega \rho_0)\cos(n\phi)S_n \frac{\vec{x} - \vec{y}}{\rho_0}  + nJ_n(\omega \rho_0) \sin(n\phi) S_n\frac{\hat{\vec{n}}_{\vec{x} - \vec{y}}}{\rho_0}\bigg), \nonumber
\end{align}
where $\rho_j := \sqrt{(x_1 - y_1 - jd)^2 + (x_2 - y_2)^2}$, $\hat{\vec{n}}_{\vec{x} - \vec{y}} := (x_2 - y_2, -(x_1 - y_1))/\rho_0$ is
a unit normal to the displacement vector $\vec{x} - \vec{y}$, and $\vec{e}_{x_1}$ is a unit vector in the $x_1$
direction.

The key task for achieving high-order accuracy is in choosing the appropriate quadrature nodes and weights appearing in \cref{nystrom-disc}. 
For smooth boundaries, efficient quadrature rules are known,
based on either the global periodic trapezoid rule, or high-order
panel-wise quadrature. 
For boundaries with corners, however, the density function $\sigma$ is singular
at the corners. The closeness of this singularity limits the radius of the Bernstein ellipse associated with the interpolant used in Gaussian quadrature rules, which in turn degrades the accuracy of
interpolation and quadrature \cite[Ch 8, 19]{atap}. To maintain the order of
accuracy whilst keeping the number of quadrature nodes the same, the size of
the quadrature interval needs to be reduced as the corner is approached, \ie
the quadrature grid needs to be refined according to the features of the
boundary \cite{greengard2014fast}.
  We divide the two sides of unit cell boundary into equally-spaced
panels, and use $P$-th order Gauss--Legendre nodes and weights on each, \ie
$s_i$ and $w_i$ are always the Gauss--Legendre nodes and weights on
the standard interval $[-1, 1]$. Then
for each panel with a corner as an endpoint, we divide the panel in a
$1:(r-1)$ ratio with $r \geq 2$, using the same $P$-th order Gauss--Legendre
scheme on each. With $r = 2$ (dyadic refinement), the net error after
$\mathcal{O}(\log_2(1/\varepsilon))$ levels of refinement will be
$\mathcal{O}\bigl(e^{-P}\log_2(1/\varepsilon)\bigr)$, where $\varepsilon$ is a given
numerical precision \cite{greengard2014fast}.
Moving from a single set of $n$ global quadrature nodes to panel quadrature with $Q$ panels, each with $P$ nodes, modifies the expression \cref{nystrom-sys} to
\be
\sigma_{j, i} - \sum_{q = 1}^{Q} \sum_{p = 1}^{P} w_{j, i, q, p} K(s_{j, i}, s_{q, p}) \sigma_{q, p} = f_{j, i}, \quad  j = 1, \dots, Q, \; i = 1, \dots, P,
\ee
where $s_{j, i}$ is the $i$-th Gauss--Legendre node on the $j$-th panel,
$\sigma_{j, i}$, $f_{j, i}$, $w_{j, i}$ are the corresponding density, boundary
data, and quadrature weights, and $K(s_{j, i}, s_{q, p})$ is the value of the
kernel associated with the given pair of quadrature nodes. 

Typically, different quadrature rules are needed depending on where $s_{j, i}$
and $s_{q, p}$ (the ``target'' and ``source'' points) lie relative to each
other \cite{bremer2010universal,atkinson1997numerical}, with special care
required if they lie on adjacent or the same panel.
The need for this can be seen by inspecting \cref{grad-phip} as $\vec{x} \to
\vec{y}$: simple Gauss--Legendre rules cannot capture the (weak) singularity
that emerges in this limit, therefore the close-to-diagonal entries in $A$ will
not be accurate. In the special case of the staircase, however, it is possible
to achieve higher order accuracy without invoking special quadrature rules:
since quadrature panels are straight lines, the interaction between source and target
points on the same or colinear panels vanishes. This is
clear from \cref{A-expr} and \cref{grad-phip}: $\vec{x} - \vec{y}$ is
perpendicular to $\vec{n}_\xx$ in this case.

Decrease in accuracy can arise from
catastrophic cancellation if $|\vec{x} - \vec{y}| \ll |\vec{x}|, |\vec{y}|$,
\eg nodes lying on small panels either side of a
corner. To mitigate this, we parameterize the quadrature panels such that the
quadrature nodes' positions are measured relative to the nearest corner.

Note that reconstructing the solution $u_s$ close to the boundary
would require special quadrature rules, since the position
vectors of the target and source points are not necessarily colinear. Since such special rules are beyond the scope of this paper, the numerical solution will not be accurate
within roughly the length of the nearest boundary panel.
Fortunately this will not prevent us from extracting the trapped power
to full accuracy.

\begin{figure}[tb]
\centering
    \includegraphics[width = 0.7\textwidth]{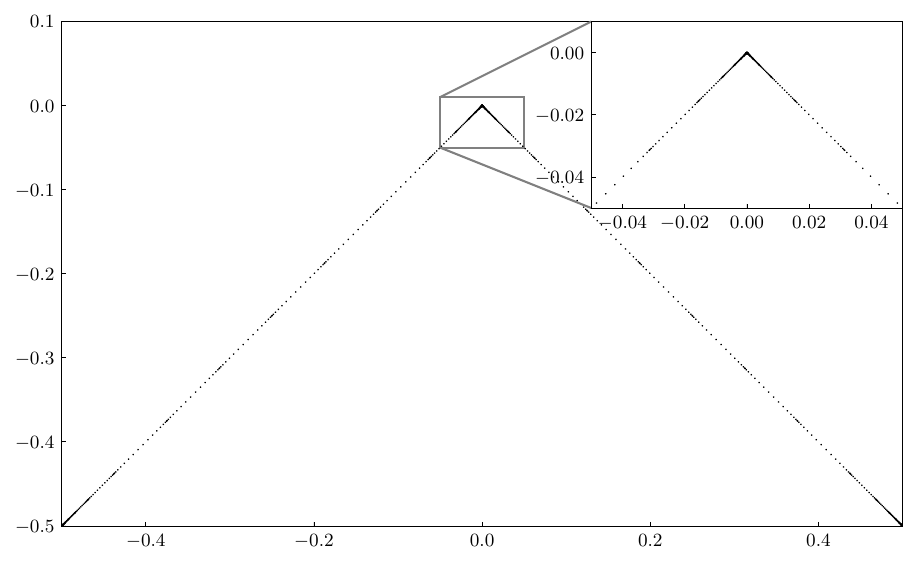}
    \caption{Discretization nodes on
      a single unit cell $\Gamma$ of the boundary.
      The underlying coarse discretization has $8$ equal panels on each
      straight line.
      Panels touching corners have then been subdivided dyadically
      $10$ times, with shrinkage ratio $r = 2$.
      Each resulting panel was populated with $16$ Gauss--Legendre quadrature
      nodes. An inset shows the result of the refinement.
    \label{stair-disc}}
\end{figure}

\subsection{Reconstructing the solution \label{beyond-uc}}

Once the density evaluated at the boundary nodes, $\vec{\sigma}^{(n)}$, is
obtained, on may compute the scattered field $u_s$ at a point $\xx$ inside the unit cell from its single-layer representation,
\be
u_s(\xx) = \sum_{j = 1}^n w_j z'(s_j) \phip(\xx, s_j)\sigma^{(n)}_j. 
\ee
(After which, $u(t) = u_i(t) + u_s(t)$.) The lattice sum representation of
$\phip$, and hence the above expression, quickly loses accuracy outside of the
circle $(x_1 + x_2)^2 = \tfrac{3d}{2}$ due to the application of Graf's
addition theorem\footnote{This would be a smaller, radius-$\tfrac{d}{2}$ circle
were $n = \pm 1$ not excluded from \cref{phipf}.} .

 Above the unit cell, the upwards propagating radiation condition
may be exploited as follows. Let $x_j:= -\tfrac{d}{2} + \tfrac{jd}{N}$ for $j =
0, 1, \ldots, N-1$ and $u^0_j := u(x_j, x_2^0)$. Evaluating the radiation condition at $x_2 = x_2^0$, notice that
after rearranging, it takes the form of a discrete Fourier transform: 
\begin{gather}
    u^0_j e^{-i\kappa x_j} = \sum_{n \in \mathbb{Z}} \tilde{c}_n e^{i 2\pi n x_j},
\end{gather}
with $\tilde{c}_n = c_n e^{i k_n x_2^0}$. Therefore, using the convention from \cite[Ch.\ 12.1]{numerical_rec},
\be
\tilde{c}_n = \frac{1}{N}\mathcal{F}\left(\{u^0_j e^{-i\kappa x_j}\}_{i=0}^{N-1}\right)_n,
\ee
and 
\be
u(x_j, x_2) = N \cdot \mathcal{F}^{-1}\left( \{\tilde{c}_n e^{i(k_n(x_2 - x_2^0) + i\kappa x_j  )}\}_{n=0}^{N-1} \right)_j.
\ee
In the $n$th neighboring unit cell, \ie $ \tfrac{(2n - 1)d}{2} <
x_1 \leq \tfrac{(2n+1)d}{2} $ (at any given $x_2$), the solution is found using the quasiperiodicity
condition,
\be
u(x_1 + nd, x_2) = \alpha^n u(x_1, x_2).
\ee

\subsection{Convergence check via flux conservation \label{conv-check-per}}

In the absence of sources of sinks inside a closed boundary $\Gamma$, the net
acoustic power (flux) leaving $\Gamma$ is zero. How close it is to zero numerically can
be used to measure the accuracy of the method and investigate
convergence in terms of the size of the linear system
\cref{nystrom-linsys-vecnot}. The acoustic power passing through a surface is\footnote{See \cite{shipmanreview},  or follow the derivation in \cite{kaltenbacher2018computational}: start from (95), then substitute in (92), the 3D generalization of (82), and use harmonicity.}
\be
F_{\Gamma} = \Im\left(\int_{\Gamma} \bar{u}u_n\d s\right).
\label{net-flux-formula}
\ee
To test the convergence of the method above, we choose the incident wave to be
a plane wave, $u_i = e^{i\omega\vec{n}_i\cdot \vec{x}}$, where $\vec{n}_i$ is a
unit vector describing its direction of travel.
This corresponds to the limit of moving a quasiperiodic point source array
to infinity in the vertical direction.
Specifically,
$\vec{n}_i = (\cos(\phi_i),
\sin(\phi_i))$ with $-\pi \leq \phi_i \leq 0$. Then $\kappa = \omega
\cos(\phi_i)$.
Let $\Gamma$
be the perimeter of the unit cell as shown in \cref{stair-geometry}, bounded from below by the
stair boundary, above by the line $x_2 = \tfrac{d}{2}$, and
from the two sides by $x_1 = \pm\tfrac{d}{2}$. The net flux through the two
sides has to be zero by symmetry, and it is also zero across the lower edge of
$\Gamma$ due to Neumann boundary conditions. Therefore we have
\be
F_{\Gamma} = \Im \left( \int_{-d/2}^{d/2}\bar{u}\frac{\partial u}{\partial x_2}\d x_1 \right) = 0.
\ee
In \cref{flux-conv-perisrc-neu} we evaluate $F_{\Gamma}$ as we refine the
corner-adjacent panels with $r = 3$. Initially there are $Q = 8$ panels on each
side with $P = 8$ nodes on each, and after $R$ levels of refinement, there are
$Q = 2(8 + 2R)$ panels and a total of $PQ = 128 + 32R$ nodes. The experiment
used $\omega = 4.0$, and $\phi_i = -0.5$.
The figure confirms exponential convergence and suggests that around $10$ levels of
refinement are needed for $\approx 7$ digits of accuracy, and around $22$ for
$10$ digits, at which point the system size is still smaller than $10^3 \times
10^3$. The reduction of error stops at this point, which is roughly consistent
with the spacing between the closest quadrature nodes approaching machine
precision.

\begin{figure}[tbp]
\centering
    \includegraphics[width = \textwidth]{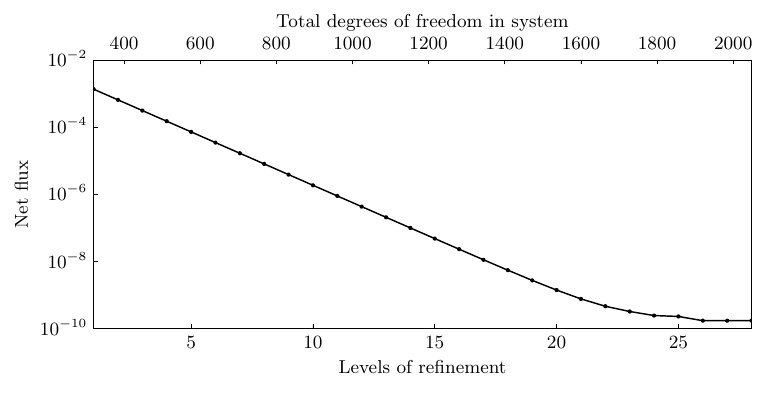}
    \caption{Convergence test via flux conservation for the case of an
    incident plane wave. The figure shows the net flux exiting the central unit
    cell, which is analytically zero, as a function of both the number of times
    the corner-adjacent quadrature panel has been subdivided on the boundary,
    and the size of the resulting linear system. During refinement the panels
    have been split in a $1:2$ ratio ($r = 3$). \label{flux-conv-perisrc-neu}}
\end{figure}

\begin{figure}[th]  
\centering
    \subfloat{\includegraphics[width = 0.5\textwidth]{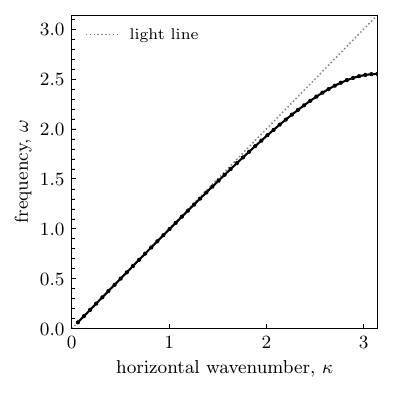}}
    \hfill
    \subfloat{\includegraphics[width = 0.5\textwidth]{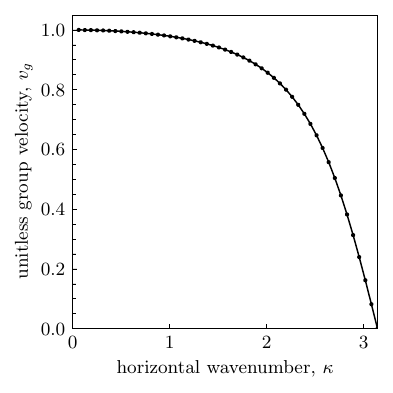}}
    \caption{Left: numerically computed band structure (dispersion relation) for evanescent trapped modes of the $\pi/4$-slope staircase with period $d=1$.
      Only the right (positive) half of the Brillouin zone is shown.
      Dots show the band structure $\omega_\trp(\kappa)$, and the line shows the values $\omega=\kappa$.
      Radiation into the upper half plane is only possible when $\omega>|\kappa|$.
      Right: group velocity $v_g := d\omega_\trp(\kappa)/d\kappa$ plotted
      over the same domain.
      \label{disper}}
\end{figure}

\begin{figure}[th]  
\centering
    \includegraphics[width = \textwidth]{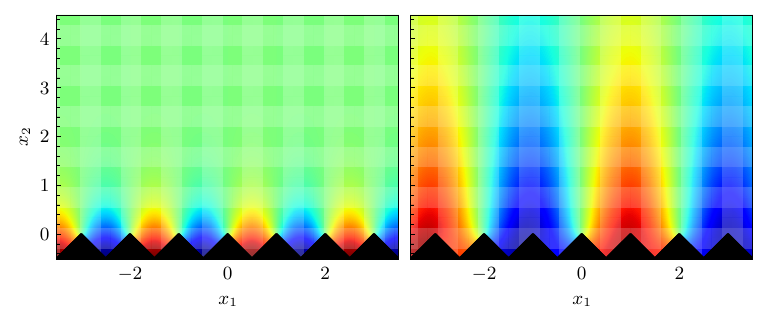}
    \caption{
      The real part of two example trapped modes $\phi$, for
      the $\pi/4$ staircase with $d=1$.
      Left: the highest frequency mode, at $\kappa=\pi$.
      Right: An intermediate frequency mode, at $\kappa = 1.54$.
      \label{modes}}
\end{figure}

\begin{figure}[th]  
\centering
    \includegraphics[width = \textwidth]{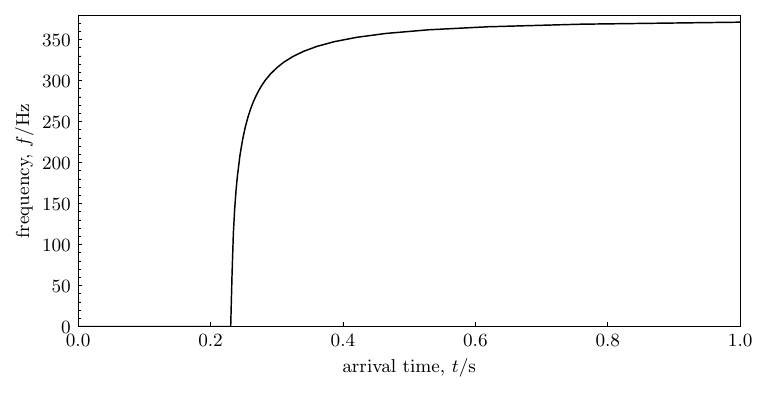}
    \caption{Graph of frequency (in Hz) vs time (in seconds since emission)
      observed a distance $D$ along a sound-hard staircase
      from an impulsive source for the wave equation.
      Parameters for the \emph{El Castillo} staircase are used,
      with $D$ the full 91 steps comprising one side of the pyramid.\label{chirp}}
\end{figure}

\section{Trapped acoustic modes and the raindrop effect \label{trapped-modes}}

In this section we describe and test a Fredholm determinant method to find
evanescent modes trapped by the corrugated sound-hard interface.
Such modes are eigenfunctions, i.e.,
nontrivial solutions to the homogeneous
quasiperiodic BVP \cref{helm-pde,helm-bc,qp-cond,uprc},
for some $\kappa\in\R$ and $\omega>0$.
Their parameters form a continuous families, thus curves in the
$(\omega,\kappa)$ plane, known as the \emph{band structure}
\cite{jobook,shipmanreview}.
Each curve may be described by 
its trapped mode frequency $\omega_\trp(\kappa)$,
commonly referred to as a dispersion relation.
For staircase geometries we find empirically that there is only a single
such trapped frequency at each wavenumber
$0 < |\kappa| \le d/2$ in the Brillouin zone, 
as shown in \cref{disper}.

Recall that the quasiperiodic BVP has the
BIE formulation \cref{fredholm-ii-cts}.
With homogeneous boundary data this becomes the BIE
\be
(I - 2D^T)\sigma = 0,
\label{homogBIE}
\ee
with $D^T$ the adjoint double-layer operator from the previous section.
One might then hope that the condition for a trapped mode to exist
is equivalent to the existence of a nontrivial density $\sigma$ solving
\cref{homogBIE}. We now show that this is indeed so,
thus that the proposed numerical method is robust
(free of spurious resonances).
\begin{thm}\label{SLP} 
  Fix $\omega>0$ and $\kappa\in\R$.
  There is a trapped mode (i.e., a nontrivial $\phi$ solving the
  homogeneous quasiperiodic BVP \cref{helm-pde,helm-bc,qp-cond,uprc})
  if and only if $\Dim \Nul (I - 2D^T) > 0$.
\end{thm}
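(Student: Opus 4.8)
The plan is to establish both implications through the single-layer representation $\phi=\mathcal{S}\sigma$ and its jump relations, with the entire nontrivial content resting on one auxiliary fact: \emph{uniqueness for the exterior Dirichlet problem} in the complementary (solid) region $D^-:=\R^2\setminus\overline\Omega$. Precisely, I would first prove the lemma that any quasiperiodic, downward-radiating solution $v$ of $(\Delta+\omega^2)v=0$ in $D^-$ with $v=0$ on $\pO$ vanishes identically; this is exactly the statement flagged in the Remark preceding the theorem, and is the only place the graph hypothesis on $\pO$ is used. Granting it, the two directions follow from standard potential theory together with a global uniqueness statement: a quasiperiodic entire Helmholtz solution that radiates both upward and downward must be zero (expand it as $\sum_n(a_ne^{ik_nx_2}+b_ne^{-ik_nx_2})e^{i\kappa_nx_1}$; the two radiation conditions force $a_n=b_n=0$).

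For the ``if'' direction, suppose $\sigma\neq0$ with $(I-2D^{\mathrm{T}})\sigma=0$ and set $\phi=\mathcal{S}\sigma$. By construction $\phi$ solves the Helmholtz equation, is quasiperiodic, and obeys the radiation condition, while the jump relation gives $\phi_n^+=(D^{\mathrm{T}}-\tfrac12 I)\sigma=0$, so $\phi$ satisfies the homogeneous Neumann BVP in $\Omega$. It remains to rule out $\phi\equiv0$ in $\Omega$. If $\phi\equiv0$ there, continuity of the single-layer gives $\phi^-=\phi^+=0$ on $\pO$, so $\phi|_{D^-}$ solves the homogeneous exterior Dirichlet problem and hence vanishes by the lemma; but then both one-sided normal derivatives vanish and $\sigma=\phi_n^--\phi_n^+=0$, a contradiction. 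Thus $\phi$ is a genuine trapped mode.

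For the ``only if'' direction, let $\phi$ be a trapped mode. First solve the exterior Dirichlet problem in $D^-$ with data $\phi|_{\pO}$ and downward radiation, obtaining $v$ (existence for a graph domain is standard, or follows from the lemma by Fredholm theory). Define $w:=\phi$ in $\Omega$ and $w:=v$ in $D^-$; it is continuous across $\pO$ by construction. Setting $\sigma:=v_n^-$ makes $w$ and $\mathcal{S}\sigma$ share the same Dirichlet trace and the same jump $-\sigma$ in normal derivative, so their difference is $C^1$ across $\pO$, solves Helmholtz in the whole strip, is quasiperiodic, and radiates both ways, hence vanishes by the global uniqueness above. Therefore $\phi=\mathcal{S}\sigma|_\Omega$, and the Neumann condition $\phi_n^+=0$ reads $(I-2D^{\mathrm{T}})\sigma=0$; moreover $\sigma\neq0$, since $\sigma=0$ would force $w\equiv0$ and hence $\phi\equiv0$. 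Thus $\Dim\Nul(I-2D^{\mathrm{T}})>0$.

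The main obstacle is the exterior Dirichlet uniqueness lemma, the only place the graph assumption is essential (a general bounded obstacle would admit interior Dirichlet resonances, and hence spurious kernels). I would prove it in two steps: (i) a Green's-identity/flux argument on a tall truncated cell---using $v=0$ on $\pO$, cancellation on the quasiperiodic sides, and the downward radiation condition---shows that all propagating ($k_n$ real) amplitudes vanish, reducing $v$ to a purely evanescent field decaying as $x_2\to-\infty$; (ii) a Rellich-type identity with the vertical multiplier $\overline{\partial_{x_2}v}$, in which the graph property of $\pO$ supplies the crucial sign, forces the remaining evanescent field to vanish. Equivalently, one may invoke known uniqueness results for quasiperiodic rough-surface Dirichlet scattering. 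Everything else is routine layer-potential bookkeeping.
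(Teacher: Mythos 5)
Your forward implication coincides with the paper's almost verbatim: set $\phi=\S\sigma$, use the jump relations to get $\phi_n^+=0$, and rule out $\phi\equiv 0$ in $\Omega$ via uniqueness of the downward quasiperiodic Dirichlet problem below the graph, concluding $\sigma=\phi_n^--\phi_n^+=0$ for a contradiction. (The paper simply cites Petit and Kirsch for that Dirichlet uniqueness, whereas you propose to reprove it by a flux argument plus a Rellich identity with vertical multiplier---more self-contained, and you correctly identify that the graph hypothesis enters only there.) The converse is where you genuinely diverge. The paper takes a short duality route: Green's representation in $\Omega$ with $\phi_n^+=0$ gives $\phi^+=(D+\tfrac12 I)\phi^+$, so $D-\tfrac12 I$ has nontrivial kernel, and Fredholm--Riesz theory transfers this to the adjoint $D^{\mathrm{T}}-\tfrac12 I$ with respect to the pairing $\langle\psi,\varphi\rangle=\int_\Gamma\psi\varphi\,\d s$. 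You instead construct a null density explicitly: solve the lower Dirichlet problem with data $\phi|_{\pO}$ to get $v$, glue $w$ from $\phi$ and $v$, set $\sigma=v_n^-$, and show $w-\S\sigma$ is an entire quasiperiodic solution radiating both upward and downward, hence zero by the plane-wave-expansion lemma; then $\phi=\S\sigma|_\Omega$ and the Neumann condition yields $(I-2D^{\mathrm{T}})\sigma=0$ with $\sigma\neq 0$. This is correct, and it buys something the paper's argument does not: an explicit formula $\sigma=v_n^-$ for the null density (the duality argument only shows the adjoint kernel is nontrivial), while avoiding the bilinear-pairing bookkeeping. The price is two ingredients the paper never needs: \emph{existence} (not merely uniqueness) for the lower Dirichlet problem, and the two-sided-radiation uniqueness lemma---note the latter fails precisely at Wood anomalies, since $k_n=0$ admits the bounded mode $e^{i\kappa_n x_1}$ satisfying both radiation conditions, so like the paper you are implicitly working away from Wood points, where $\phip$ and hence $D^{\mathrm{T}}$ are defined in the first place. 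One wording slip worth fixing: $w$ and $\S\sigma$ need not ``share the same Dirichlet trace''; what is true, and what your argument actually uses, is that each is separately continuous across $\pO$ and they have the same normal-derivative jump $\sigma$, so the difference has zero Cauchy-data jump and extends as a Helmholtz solution across the interface.
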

\begin{proof}
  Let $\sigma$ be a nontrivial solution to $(D^T - I/2)\sigma=0$,
  and let $\phi=\S\sigma$ throughout $\R^2$, recalling that $\S$
  is the quasiperiodic single-layer potential on $\Gamma$ (the
  part of $\pO$ in the central unit cell).
  Then $(\Delta + \omega^2)\phi = 0$ in $\R^2\backslash \pO$.
  By the jump relations, $\phi_n^+ = 0$ (using the notations
  \cref{upm} and \cref{unpm}), and by construction $\phi$
  also satisfies \cref{qp-cond} and \cref{uprc}.
  It remains to show that $\phi$ is nontrivial. Assume this were
  so, then $\phi^+=0$, so by the jump relations, $\phi^-=0$.
  However, $\phi$ in the half-space below $\pO$ would then be
  a homogeneous solution to the (downward-facing)
  quasiperiodic Dirichlet problem,
  which is unique \cite[p.~56]{petit} \cite[Thm.~2.1]{kirsch94}.
  Thus $\phi$ would vanish below $\pO$, so $\phi_n^-=0$. By the
  jump relation $\sigma=\phi_n^--\phi_n^+$ would vanish, a
  contradiction with the hypothesis. Thus $\phi$ is nontrivial,
  hence a trapped mode.

  For the converse, let $\phi$ be a trapped mode,
  then by the quasiperiodic version of Green's representation theorem,
  in the upper domain $\phi = \S \phi_n^+ + \D \phi^+$,
  where $\D$ is the quasiperiodic double-layer potential defined by
  \[  
      (\D \sigma)(\xx) := \int_\Gamma \frac{\partial \phip(\xx, \yy)}{\partial \vec{n}_\yy} \sigma(\yy) \d s_\yy~.
  \]
  Since $\phi_n^+=0$, and taking $\xx$ to $\pO$ from above,
  $\phi^+ = (D+I/2)\phi^+$, showing that $D-I/2$ has a nontrivial
  null vector. By the Fredholm--Riesz theory, the same holds for $D^T-I/2$,
  since it is the adjoint with respect to the bilinear form
  $\langle \psi,\phi \rangle := \int_\Gamma \psi \phi \d s$
  \cite{coltonkress_scatt}.
\end{proof}
This informs our numerical approach: we fix $\kappa$
and solve a nonlinear eigenvalue problem with respect to $\omega$.
We adapt the Fredholm determinant method
of Zhao and the 2nd author \cite{zhaodet}.
$g(\omega) := \det (I - 2D^T)$ is approximated by an $N\times N$
determinant, where $I-2D^T$ is replaced by its 
Nystr\"om matrix with entries given by \cref{A-expr}.
At each $\kappa$, then $\omega_\trp(\kappa)$ is found as a root of
$g(\omega)$, using a simple Newton iteration
with a convergence criterion close to machine accuracy.
For staircase geometries, we find only one such root,
and always with $\omega_\trp(\kappa)<|\kappa|$,
implying that the mode is trapped (frequency is below the light line),
rather than embedded in the continuous spectrum
\cite{jobook,shipmanreview}.
Because the trapped frequencies never intersect the light lines,
the issue of Wood anomalies does not cause a problem in the mode-finding
task.

\Cref{disper} shows this set of $\omega_\trp$ found at each
$\kappa$.
The gap between $\omega_\trp(\kappa)$ and $|\kappa|$ indicates the
strength of trapping (rapidity of evanescent decay as $x_2\to\infty$).
The most trapped mode is at $\kappa = \pm\pi$, known as an
``optical mode'', and has $\omega_\trp \approx 2.551$ for the $d=1$
staircase.
As $\kappa \to 0$ we see trapping becoming arbitrarily weak.

With a zero of the determinant found,
the null vector $\sigma$ obeying $(I-2D^T)\sigma=0$ is found
via an SVD, then $\phi = \S\sigma$ gives the mode.
Numerically the lattice-sum expansion may only be used up to a height
around $d/2$ above the origin, so as before one must switch
to Fourier series evaluation above this.
\Cref{modes} shows the real part of example modes.

\subsection{Ray model for time-domain chirp response at \textit{El Castillo}}
\label{ray}

The above trapped mode dispersion curve provides a simple
explanation of the acoustic ``raindrop'' effect on the long
stone staircases at the \emph{El Castillo} pyramid at Chichen-Itza.
\footnote{We are very grateful to Eric Heller for suggesting this
explanation; it is also described by Heller in \cite[p.~162]{hellerbook}.}
Impulsive sources of sound, such as footsteps, are reported
to sound like short chirps (frequency rising vs time) when heard from
distances far up or down the staircase.
Pending a full numerical investigation of the time-domain
solution to \cref{WE,WEBC}, we use a simple ray model
for dispersive propagation \cite[Sec.~2.6]{buhler}.
Both source and receiver are assumed to be
close enough to the surface to couple well to
the modes $\phi$ at all wavenumbers (this is true for a footstep,
less obviously so for a standing listener).
Absorption and modeling of amplitudes are ignored.
Impulsive excitation in \cref{WE} is assumed to excite all frequencies.
Each frequency $\omega$ below the maximum of $\omega_\trp$
is partly trapped (see \cref{power-in-trapped}),
and this component propagates along the
staircase at its \emph{group velocity}
\be
v_g = \frac{\d \omega_\trp}{\d \kappa},
\label{vg}
\ee
at the appropriate $\kappa := \kappa_\trp$ such that $\omega_\trp(\kappa) = \omega$.
We plot $v_g$ vs $\kappa$ on the right of \cref{disper},
by differentiating the interpolant of $\omega_\trp$ on the $\kappa$-grid.
The frequency's arrival time a distance $D$ along the staircase
is thus $t(\omega) = D / v_g(\omega)$.
Finally, inverting this last relationship gives the frequency $\omega$
heard at each time $t$ after the impulse.

We now insert physical parameters for \emph{El Castillo}.
The nondimensionalized speeds used in the rest of the paper must
be multiplied the sound speed $c=343 $ m/s.
According to \cite{declercq2004theoretical} the stairs at this pyramid
have depth equal to height equal at $q=0.263$ m, i.e.\ a $\pi/4$ staircase
with period $d = \sqrt{2} q \approx 0.372$ m.
The maximum trapped mode frequency
is thus $(c/d) 2.551 \approx 374$ Hz;
this is the highest frequency explainable in the model.
\footnote{The frequency of a free-space plane wave traveling along the staircase
with wavenumber at the Brillouin zone edge $\kappa = \pi/d$
is $c/2d \approx 461 $ Hz. Neither of these frequencies appear in \cite{calleja}, although a ``raindrop frequency'' of 307.8 Hz is mentioned.}
Each staircase has 91 steps, giving $D \approx 34$ m.

The resulting predicted frequency vs arrival time
is shown in \cref{chirp}.
The first arrivals are the lowest frequencies; for these the dispersion
curve is almost that of free air ($v_g \approx c$) so they start
arriving immediately after any direct (non-trapped) radiation.
Most of the frequency ``chirp'' occurs during the first 0.2 s after first
arrival, an interval containing of order 50 cycles, and thus
plenty to detect the upwards frequency trend.
A long ``bell-like'' tail, asymptoting to the maximum 374 Hz, is expected,
associated with the slowly-propagating optical modes
as shown on the left of \cref{modes}.

\begin{rmk}
  We provide a link to a WAV file simulated to match the above chirp
    prediction at this URL: \url{https://doi.org/10.5281/zenodo.10005461}.
  The authors do not have access to audio recordings of footsteps
  on this staircase, and would welcome the chance to validate the
  predictions.
\end{rmk}

\section{Scattering from a single point source \label{singleptsrc}}

As laid out in \cref{intro}, we simulate scattering from a single point source
by integrating over the solutions from a quasiperiodic array of point sources
with different $\kappa$ values. Let $u_{\kappa,i}(\xx) = \phip_{,\kappa}(\xx, \xx_0)$, where the
subscript $\kappa$ refers to having fixed $\kappa$ and $x_0$ is a given source
position within the central unit cell. Then from \cref{asm-setup-per,asm-setup-single} it follows that
\be
u_i(\xx) = \Phi(\xx, \xx_0) = \frac{1}{2\pi}\int_{-\pi}^{\pi} u_{\kappa,i}(\xx) \d \kappa,
\label{singlesrc-gf-recon}
\ee
and, with $u_{\kappa}(\xx)$ referring to the total field due to the above incident field $u_{\kappa, i}(\xx)$, the array scanning integral is
\be
u(\xx) = \frac{1}{2\pi}\int_{-\pi}^{\pi} u_{\kappa}(\xx) \d \kappa = \frac{1}{2\pi}\int_{-\pi}^{\pi} (u_{\kappa, i}(\xx) + u_{\kappa, s}(\xx))\d \kappa,
\label{asm-ux}
\ee
where $u_{\kappa, s}$ is the scattered field associated with $u_{\kappa, i}$.

To achieve high-order accuracy one must
understand the complex $\kappa$ plane singularities of the above integrand.
For example, in \cref{complexkappaplane-path}
we plot the integrand in \eqref{asm-ux}
for a target point $\xx = (0.22, -0.16)$,
frequency $\omega = 2.4$, and source point
$x_0 = (-0.2, 0.1)$. Each side of the stair boundary was split into $4$ panels
initially, then refined $5$ times with $r = 3$, until the total number of
quadrature panels over the boundary was $Q = 28$, each with $P = 16$ nodes.
This gives around $7$ accurate digits according to the
convergence test in \cref{flux-conv-perisrc-neu}. The two key features in the figure
are the branch points due to Wood anomalies at $\kappa = \pm \omega$, and
two poles at $\kappa = \pm\kappa_{\mathrm{tr}}$.
They are ordered such that $\omega \leq \kappa_{\mathrm{tr}} \leq \pi$. The branch
cuts associated with the branch points are indicated by dashed lines. The direction of the branch cuts can be
chosen by altering the integration path in \cref{sn-int} (by choosing $\tau(t)$), but as it will become clear
later, it is convenient to choose the cuts to lie in the lower and upper
half-plane for negative and positive $\kappa$, respectively. If $\omega$ were
larger than $\omega_c$,
where no trapped modes exist for any $\kappa$, there would be no poles.
In the limits $\omega \to 0$ and $\omega \to \omega_c$, the
two poles coalesce at $\kappa_{\mathrm{tr}} \to 0$ and $\kappa_{\mathrm{tr}} \to \pm \pi$ respectively.

\begin{figure}[tb]
\centering
    \includegraphics[trim = {1.2cm, 0cm, 2.7cm, 0cm}, clip, width = \textwidth]{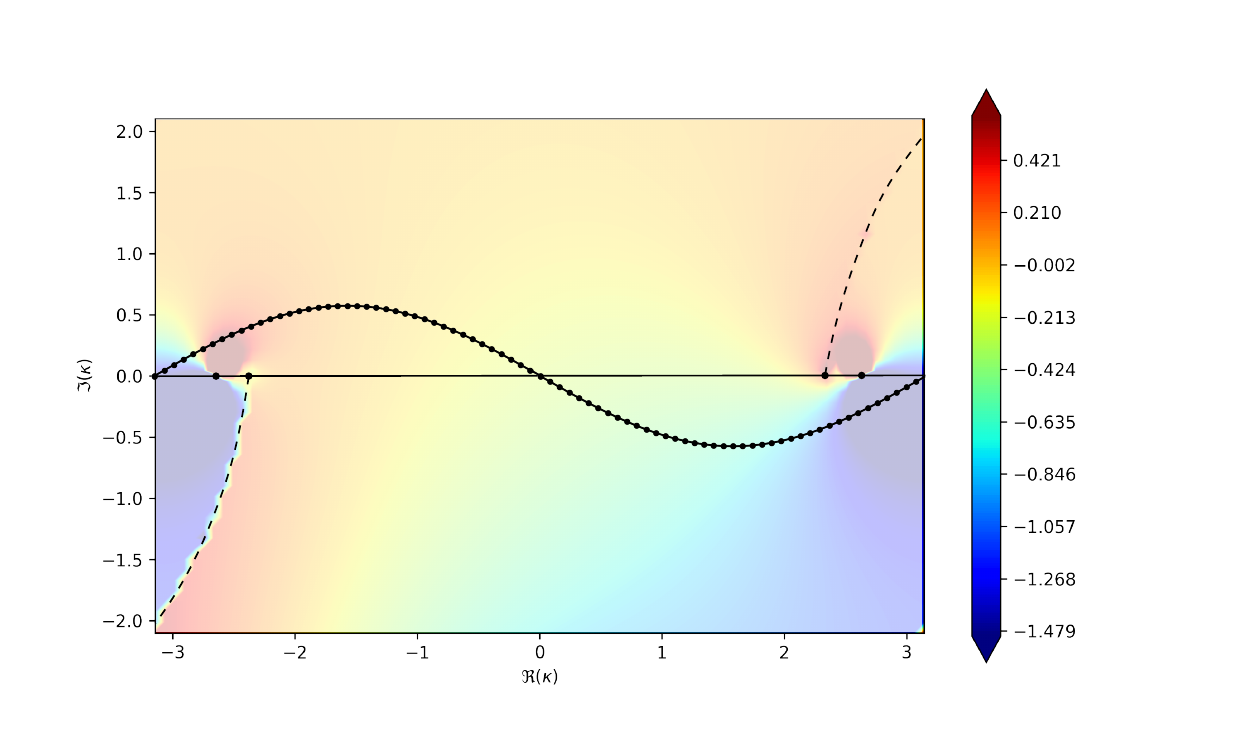}
    \caption{Real part of the integrand of the array scanning integral \cref{asm-ux} plotted in
    the complex $\kappa$-plane, where the field is evaluated at a single
    target, $\xx = (0.22, -0.16)$. Plotted over this are the branch points and cuts
    (dots with dotted lines), poles (dots), and the contour deformation used (solid black
    line, with an example set of 80 quadrature nodes plotted on top). \label{complexkappaplane-path}}
\end{figure}

The branch points and poles in \cref{asm-ux} require careful treatment. If
$\omega > \omega_c$, the branch points can be integrated over without leaving
the real axis, due to Wood anomalies being known to be square-root
singularities
\cite{fano1941theory,hessel1965new,bolotovskii1968threshold,wojcik2021universal}.
They may then be dealt with using special Gaussian quadrature, \eg
\cite{huybrechs2009generalized}. With the emergence of the trapped mode poles
at $\omega \leq \omega_c$, options include singularity extraction
\cite{rana1981current}, and contour deformation
\cite{he2007radiation,lovat2011dipole}. We opt for the latter, deforming the
contour away from the real axis and the branch cuts, as shown by the solid black line in
\cref{complexkappaplane-path}.

Note that in order for the solution to correspond to outgoing waves, the
integration contour has to be deformed into the upper half-plane for negative
$\kappa$ and the lower half-plane for positive $\kappa$. This idea was
introduced in \cite{v1905reflexion} and termed the \emph{limiting absorption
principle} by \cite{sveshnikov1950radiation}. The direction of the branch cuts
was chosen with this in mind, following \cite{ruming21}.

We choose a sinusoidal deformation contour parameterized by $k:= \re \kappa$,
\be
\kappa(k) = k -iA\sin(k), \label{sine-contour-param}
\ee
which is then discretized using the periodic trapezoidal rule with $p_{\mathrm{asm}}$ nodes.
To get an idea of what accuracy a given $A$ and
$p_{\mathrm{asm}}$ yields, we use \cref{singlesrc-gf-recon} as a test
problem, and investigate how accurately $\Phi$ can be reconstructed from
$\phip_{,\kappa}$ in \cref{conv-phi-asm}. The top part of the figure shows
the real part of total solution $u(\xx)$, computed to $7$ digits of accuracy at $\omega = 2.4$, with a point source at $\xx_0 = (-0.2, 0.1)$. The bottom part of
\cref{conv-phi-asm} shows
that the exponential convergence rate is sensitive to the exact value of the amplitude, and that values
between $0.5$ and $2$ are optimal.
In this range, $p_{\mathrm{asm}} \approx 100$ quadrature nodes are
enough to get an answer accurate to machine precision.

\begin{figure}[tb]
\centering
    \subfloat{\includegraphics[width = \textwidth]{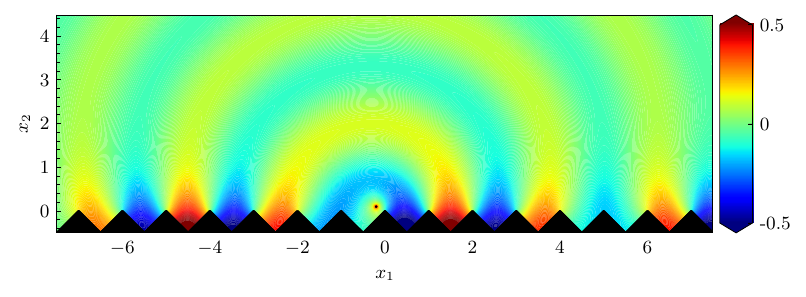}}\\
    \subfloat{\includegraphics[width = \textwidth]{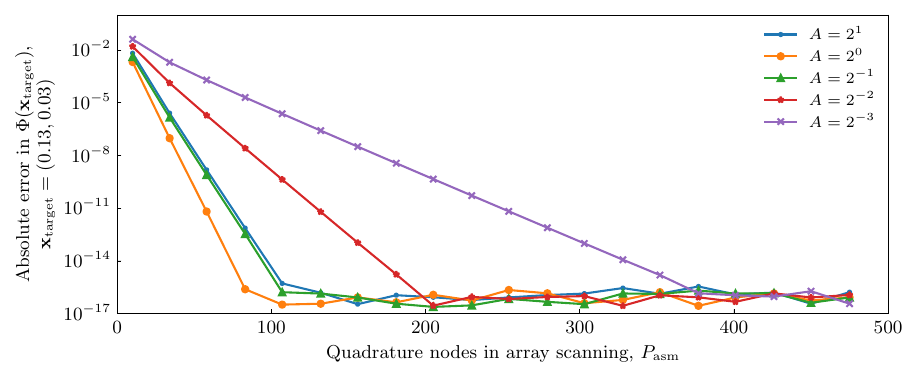}}
    \caption{Top: Real part of total acoustic pressure field from a point source at $\xx_0 = (-0.2, 0.1)$, radiating with frequency $\omega = 2.4$, accurate to $7$ digits. Bottom: convergence test of the reconstruction of
    $\Phi(\xx_{\mathrm{target}})$, at $\xx_{\mathrm{target}} = (0.13, 0.03)$, with
    the array scanning method. The sinusoidal contour shown in
    \cref{complexkappaplane-path} was taken, with various amplitudes $A$,
    and discretized with $p_{\mathrm{asm}}$ trapezoidal quadrature nodes.
    \label{conv-phi-asm}}
\end{figure}

So far we only considered computation of the solution inside the central unit
cell (and above it), $-\tfrac{d}{2} \leq x_1 \leq \tfrac{d}{2}$. Using the
quasiperiodicity property, the array scanning integral \cref{asm-ux} for a target
position in the $n$th unit cell becomes
\be
u^n(\xx) := u(x_1 + nd, x_2) = \frac{1}{2\pi}\int_{-\pi}^{\pi}u_{\kappa}(\xx)e^{ind\kappa}\d \kappa.
\label{asm-ux-n}
\ee
\Cref{asm-integrand-nlarge} shows the array scanning integrand at a target $n =
5$ unit cells away from the center, to be compared to
\cref{complexkappaplane-path}. The exponential
factor causes oscillations in the real and exponential growth in the imaginary
direction in the lower (upper) half-plane for positive (negative) $n$. The exponential growth limits the
amount of contour deformation away from the real axis, and both the growth and
oscillations demand more quadrature points to maintain constant accuracy as $n$
increases.
Thus as $n$ grows, quadrature along the contour becomes a less practical
method to evaluate the solution.

However, in the limit $n \to \pm \infty$, the solution may be
computed easily in a different manner.
Consider first the case $n\to+\infty$.
We deform the sinusoidal contour to the contour shown in orange
in \cref{asm-integrand-nlarge};
if there is a trapped mode (pole) at $\kappa_\trp$ then this
deformation introduces a correction by the residue of that pole.
The contribution of the orange contour vanishes in the limit $n \to +\infty$
by standard Jordan's lemma type arguments.
Namely, the term $e^{ind \kappa}$ vanishes
for any $\im \kappa > 0$, making the contributions of the segments parallel to the branch cut, and the horizontal segments, zero. The other vertical segments cancel by periodicity.
Taking the radius of the ``keyhole'' around the branch point to zero,
its contribution vanishes since the integrand involves powers of at least $-1/2$.
By Cauchy's theorem, the value of the array scanning
integral is thus only the residue at $\kappa_\trp$, if such a pole exists,
and zero otherwise.
Considering now only the case with such a residue,
since the phase of $u$ changes with $n$, one has to take care incorporating
the correct phase in order for a limit to exist:
\begin{align}
  u^{+,\infty}(x_1,x_2) &:=
  \lim_{n \to \infty} e^{-ind\kappa_\trp} u(x_1 + nd, x_2) = \lim_{n \to \infty} \frac{1}{2\pi} \int_{-\pi}^{\pi} u_{\kappa}(x_1, x_2)e^{in(\kappa-\kappa_\trp)d}\d \kappa \nonumber \\
    &=  i \underset{\kappa = \kappa_\trp}{\Res} u_{\kappa, s}(x_1, x_2).
    \label{uinfty}
\end{align}
In the case $n\to-\infty$, we deform the sinusoidal contour into the lower half-plane, as shown in blue in \cref{asm-integrand-nlarge}. By the same arguments, if a trapped mode exists, the solution is
\be
  u^{-,\infty}(x_1,x_2) := \lim_{n \to -\infty} e^{-ind\kappa_\trp} u(x_1 + nd, x_2) =  -i \underset{\kappa = -\kappa_\trp}{\Res} u_{\kappa, s}(x_1, x_2),
    \label{uminfty}
\ee
and zero otherwise.

Finally, we explain how we
numerically extract the residue at $\kappa = \pm \kappa_\trp$,
which can be viewed as extracting the left- and right-going
trapped mode amplitudes.
We use Cauchy's theorem once more, and integrate on
a circular contour enclosing the relevant pole, applying
a trapezoidal rule with $p_\res$ nodes.
The radius of this circle, $r_\res$, is chosen so that the amplitude of the
integrand along the contour stays as close to $1$ as possible, thus avoiding
loss of accuracy due to catastrophic cancellation. The contour also needs to
lie on one sheet, and therefore cannot cross a branch cut. The radius $r_\res$ is therefore determined by the distance to the nearest branch point or the distance to the nearest pole, whichever is smaller.
We discuss the $\kappa = +\kappa_\trp$ case and use the same parameters for negative $\kappa$. By inspection of the dispersion relation in
\cref{disper}, it is clear that for all but $\kappa_\trp \to \pi$, the distance to the nearest branch point, at $\kappa = \omega$, is smaller. We use
\be
r_\res = \min \left(0.5\cdot |\kappa_\trp - \omega|, |\pi - \kappa_\trp|\right) \label{r-res}
\ee
to determine the radius, and $p_\res = 64$ nodes, after convergence testing by doubling $p_\res$.

The main claim of the previous section is a direct consequence of the above
result: at $\omega < \omega_c$ the only contributor to the field infinitely far
away from the source is the trapped mode at the given $\omega$. At $\omega >
\omega_c$, where no trapped modes exist, the field in this limit is zero, since
the contour in the complex $\kappa$ plane contains no poles.
As we have seen in
\cref{trapped-modes}, trapped modes at different frequencies have different vertical
decay lengths, and propagate at different speeds. It is therefore of interest to
compute how much of the power injected into the system is transported away in
trapped modes, as opposed to radiated vertically,
as a function of frequency---this
is the subject of the next section. 

\begin{figure}[tb]
\centering
    \includegraphics[trim = {1.2cm, 0cm, 2.7cm, 0cm}, clip, width = \textwidth]{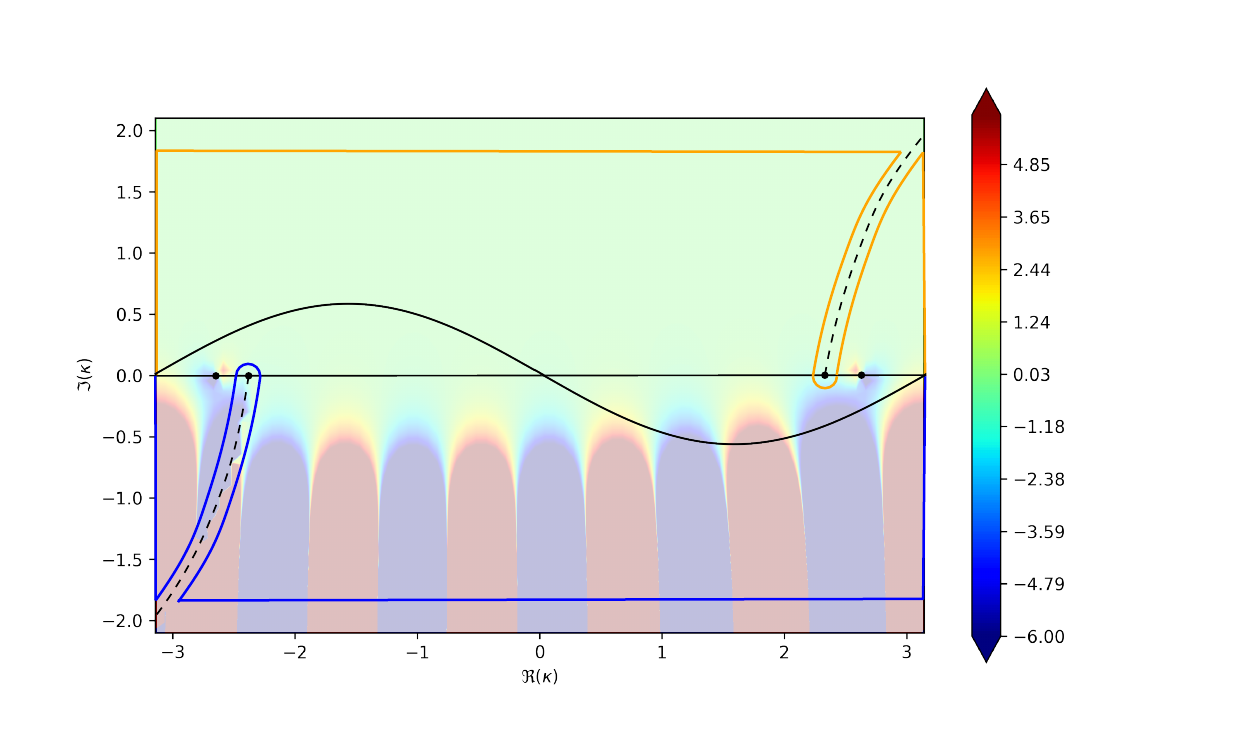}
    \caption{Real part of the array scanning integrand \cref{asm-ux-n} with the
    target point $n = 5$ unit cells away from the source, to be compared with
    \cref{complexkappaplane-path}. Identical branch cuts, poles, and
    contour deformation are shown in black. In orange and blue we plot the contours
    we use to derive the field in the limit of $n \to \pm \infty$,
    respectively. \label{asm-integrand-nlarge}}
\end{figure}

\section{Extracting the asymptotically trapped power \label{power-in-trapped}}

In this section we find the total power injected into the system by a single
point source, and compute the fractional power carried to infinity in trapped modes, as a
function of frequency. The source $\xx_0$ is assumed to be inside the central unit cell, \ie, $-\tfrac{d}{2} < x_1^0 < \tfrac{d}{2}$.

The total power radiated by a single point source
can be derived by taking the
total flux \cref{net-flux-formula} exiting a circle of radius $r$ (denoted
$\Gamma_r$) centered on the source, and then taking the limit $r \to 0$:
\be
P_{\mathrm{tot}} = \lim_{r \to 0} \left(\Im \int_{\Gamma_r} (\bar{u}_s + \bar{u}_i)\partial_n(u_s + u_i) \, \d s \right).
\ee
Out of the four terms, only two contribute: since $u_s$ and its derivative is
finite everywhere, the integral of $u_s\partial_n u_s$ vanishes; and $u_i$ only
grows like $\ln r$ as $r \to 0$, therefore its contribution is $\lim_{r \to 0} r\ln r =
0$. Using the asymptotic form of $H_0^{(1)}(\omega r)$ as $r \to 0$ \cite[Ch.\ 10.7]{dlmf}, it can be shown that the remaining terms add up to
\be
P_{\mathrm{tot}} = \frac{1}{4} + \Im(u_s(\xx_0)).
\ee
Thus the radiated power is influenced by the scattered wave at the emission point.

For each value of $\omega$, we use the strategy outlined in \cref{singleptsrc}
to compute $u_s(\xx_0)$, \ie integrate along a deformed array scanning contour
in $\kappa$ using the
trapezoidal rule. However, in the limits $\omega \to 0$
(where $\kappa_\trp\to0$), or $\omega \to \omega_c$
(the cutoff frequency, where $\kappa_\trp\to \pi$),
the contour must pass between two coalescing poles.
This necessitates an increase in
quadrature node density in the section of the contour closest to the
poles. In these limits, it is therefore inefficient to evenly space the nodes
in $\re \kappa$. We instead follow \cite[Sec.~3]{barnettexpgrading} and define an
exponentially graded reparameterization of the real part of $\kappa$,
via a periodic map $\theta: [-\pi,\pi)\to[-\pi,\pi)$, namely
\be
\kappa(k) = \theta(k) - iA\sin(\theta(k)), \quad \text{where } \theta'(k) = \begin{cases}
    \alpha \cosh \left(b\sin \frac{k}{2}\right) \quad \text{if } \kappa_\trp \to \pi,\\
    \alpha \cosh \left(b\sin \frac{k - \pi}{2}\right) \quad \text{if } \kappa_\trp \to 0,
\end{cases}
\label{expgrading}
\ee
which bunches quadrature nodes that are evenly spaced in $k$
by a factor of order $e^b$ close to either $\pi$ or $0$.
The normalization $\alpha$ is determined
numerically by requiring $\int_0^{2\pi}\theta'(k)\d k = 2\pi$.
To achieve uniform accuracy as $\kappa_\trp \to 0$ or $\kappa_\trp \to \pi$,
one needs to update $b$ as $b = c_1 + c_1\log \kappa_\trp$ and $b = c_3 +
c_2\log(\pi - \kappa_\trp)$, respectively, where $c_1$--$c_3$ are constants to
be determined. Here, we consider $\kappa_\trp, \pi - \kappa_\trp > 0.05$, for
which we find (by doubling $p_{\mathrm{asm}}$ to gain an upper bound on the
error) that setting $b = 5$, $p_{\mathrm{asm}} = 60$ is sufficient to obtain a
value for $P_{\mathrm{tot}}$ accurate to 8 digits. We therefore use these
settings if $\omega \lesssim 0.5$ or $2.6 \lesssim \omega$, and $b = 0$ (evenly
spaced nodes in $\Re \kappa$) otherwise.
This parameter combination ensures that the nearest pole is at
least 5 quadrature node spacing's distance away from the contour.

\begin{table*}[tb]
    \footnotesize
    \centering
    \renewcommand{\arraystretch}{1.2}
\begin{tabular}{llr}
\cline{1-3}
    
Parameter          & Description                                                                                            & Value                                                 \\ \cline{1-3}
$A$ & Amplitude of sinusoidal array scanning contour in \cref{expgrading}           & 1.0                                                  \\
$p_{\mathrm{asm}}$ & Number of trapezoidal nodes along sinusoidal array scanning contour                                    & 60                                                      \\
$b$ & Exponential grading parameter along sinusoidal array scanning contour                                    & 0.0 if $0.5 \lesssim \omega \lesssim 2.6$, 5.0 otherwise                                                        \\
$r_\res$           & Radius of circular contour for residual calculation                                                    & given by \cref{r-res}                      \\
$p_\res$           & Number of trapezoidal nodes along circular residual contour                                            & 64                                                      \\
    $\epsilon$            & Tolerance for determining the upper limit \cref{trapped-int-upperlim} of trapped power integral \cref{trapped-pow-integral}                     & $10^{-8}$  \\
    $p_\trp$           & Number of Gauss--Legendre nodes in trapped power integral \cref{trapped-pow-integral} &                                             128          \\ \cline{1-3}
\end{tabular}
    \caption{Parameters used in computing the fractional power transported by trapped modes, chosen so that the total power $P_{\mathrm{tot}}$ and power in trapped modes $P^{\pm}_\trp$ is accurate to $8$ digits for all $\kappa_\trp$ considered, \ie, $\kappa_\trp, \kappa_\trp - \pi > 0.05$. \label{trap-param-summary}}
\end{table*}

As shown in \cref{singleptsrc}, the only contributors to the field infinitely far away
from the source along the surface are trapped modes.
Let $\Gamma_\infty$ be the semi-infinite vertical
line segment with a given $x_1$ coordinate, starting on the surface.
We can use \cref{uinfty} to reconstruct $u^{+,\infty}$ or $u^{-,\infty}$
on this line segment.
Then the power in the left-going ($-$) or right-going ($+$)
trapped modes is
\be
P^\pm_\trp = \pm
\Im \left( \int_{\Gamma_\infty} \bar{u}^{\pm,\infty} \partial_{x_1} u^{\pm,\infty} \d s \right).
\label{trapped-pow-integral}
\ee
Note that this expression must be independent of the choice of
horizontal position $x_1$, by power conservation.
Furthermore, the phase introduced in \cref{uinfty} cancels.
Given any $x_1$, we approximate \eqref{trapped-pow-integral}
by a quadrature rule from the surface point $x_2^0$ to an
upper limit 
$x_2^1$, chosen such that the given trapped mode has sufficiently decayed.
Based on the vertical
mode intensity decay rate (twice the amplitude decay rate in \eqref{kn})
we thus set $x_2^1$ to
\be
x_2^1(\kappa) := x_2^0 + \frac{\log(1/\epsilon)}{2\sqrt{\kappa^2 - \omega^2}}, \label{trapped-int-upperlim}
\ee
where $\epsilon>0$ is a desired error tolerance.

While the choice of $x_1$ is immaterial mathematically,
one choice is much more convenient numerically:
the corner (trough) at $x_1 = \pm\tfrac{d}{2}$ is best, for the following reason.
Firstly, $x_1$ passing through a corner is to be preferred to any other
part of $\Gamma$,
since the panels discretizing $\Gamma$ are already geometrically
refined towards corners, allowing accurate evaluation
arbitrarily close to the corner via plain quadratures.
(In contrast, were $x_1$ to intersect any flat part of $\Gamma$,
a special close-evaluation quadrature would be needed at points
closer than one panel-size from $\Gamma$.)
Secondly, to decide whether the upper ($3\pi/2$ angle at $x_1=0$) or lower ($\pi/2$ angle at $x_1=\pm d/2$) corner is to be preferred,
one expands the total field in terms of Bessel functions
around either point and imposes Neumann boundary conditions.
This shows that only the lower corner has a regular (hence analytic) expansion
involving even powers.
The potential at the upper corner is nonanalytic since it involves
powers that are multiples of $2/3$.
The above then
implies that Gauss--Legendre quadrature with $p_\trp$ nodes is high-order accurate for the integral
\cref{trapped-pow-integral} along the line $x_1 = \pm d/2$, so this
is what we use, with $p_\trp = 128$.

\Cref{flux-distr} shows the power balance as a function of $\kappa$ on the left
and $\omega$ on the right, with power tolerance $\epsilon=10^{-8}$. For
reproducibility, we summarize the parameters used in \cref{trap-param-summary}.
Since the fluxes calculated here are unitless, we
show the flux in trapped modes $P^{+}_\trp + P^{-}_\trp$, the total flux, and the fraction of the flux
carried in trapped modes all in one plot. As expected from their large vertical
decay length, the least trapped modes at $\omega$, $\kappa \to 0$ carry the
least flux. The highest-frequency trapped modes carry a fraction of the total
power that approaches $1$, and the increase of this fraction with frequency or
Bloch wavenumber is close to linear until an abrupt drop to zero at $\omega >
\omega_c$. These ``critically trapped'' acoustic modes are the most
efficient both
at sucking power out of the source (given a fixed source amplitude),
and at trapping this input power at the surface.

\begin{figure}[tb]
\centering
\includegraphics[width = \textwidth]{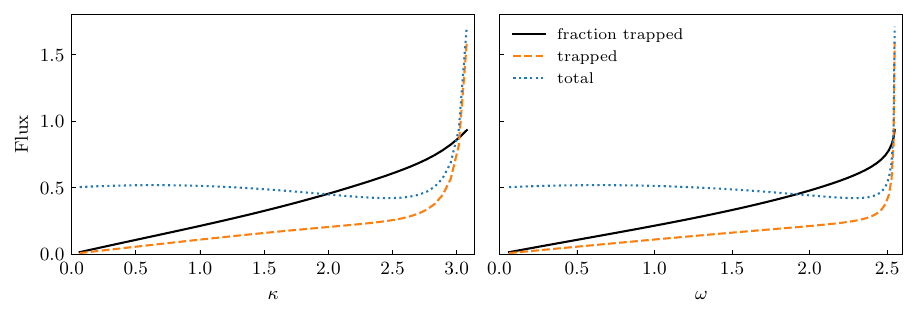}
    \caption{Left: Total flux (unitless) in the system, flux in trapped modes, and fraction of flux in trapped modes, as a function of Bloch wavenumber $\kappa$. Right: The same quantities plotted against the frequency $\omega$ of the Helmholtz equation.  \label{flux-distr}}
\end{figure}

\section{Conclusions and future work \label{future}}

We described a boundary integral equation method for solving the
$2$-dimensional, constant-coefficient Helmholtz equation on an exterior domain
outside of an infinite, periodic boundary with corners. We did this in the
context of acoustic scattering, with a particular emphasis on trapped modes.
Using corner-refined Nystr\"om quadrature on the boundary, we built
a dense direct solver for the quasiperiodic problem.
By integrating over the quasiperiodicity parameter, a method known
as array scanning, we computed the scattering solution from a single point
source, and extracted the limit of the acoustic pressure field infinitely far
away from the source. Obtaining high-order accuracy required a detailed
understanding of poles and branch-cuts, due to trapped modes and Wood anomalies
respectively. To this end we proposed a complex contour deformation and nonuniform
reparametrization for an efficient quadrature.
We proposed a residue method to extract the amplitudes of left- and right-going
trapped surface waves, and used this
to study the fraction of injected acoustic power
that ends up in trapped modes, as a function of frequency.

We show that the trapped modes (quasiperiodic eigenfunctions)
map precisely to roots of a Fredholm determinant---see Theorem~\ref{SLP}.
By applying Nystr\"om quadrature to this, we
compute the trapped mode dispersion relation and group velocity.
A simple ray model then allowed us to
predict frequency vs arrival time in the ``chirp'' phenomenon observed at \textit{El Castillo}
at Chichen Itza, and other similar acoustics recorded at step-temples.

Several questions remain for staircase acoustics:
are there source locations close to the surface which excite trapped modes less, or excite the left- and right-going
modes asymmetrically? Similarly, it would be of interest to extend the analysis
to a wider range of periodic boundaries that possess asymmetry.

Although dense direct linear algebra as presented here was adequate
for the studied geometry and accuracies, more complex geometries
requiring more than $10^4$ discretization nodes
would benefit from the use of iterative solution with FMM acceleration.
The extension to 3D doubly-periodic structures to high-order accuracy
poses an interesting challenge, because there the set
of Wood-anomaly on-surface wavevectors form curves.

\section*{Acknowledgments}

We are indebted to Eric J.\ Heller for suggesting that the
raindrop effect is due to the dispersion relation of
trapped evanescent waves; this motivated developing the methods
presented here.
We also thank Charlie Epstein, Manas Rachh, Leslie Greengard, and Simon Chandler-Wilde
for their helpful comments and input.
The Flatiron Institute is a division of the Simons Foundation.

\bibliographystyle{elsarticle-num}
\bibliography{refs}
\end{document}